\def\cG{{\cal G}}
\def\cO{{\cal O}}
\def\cS{{\cal S}}
\acrodef{GMRF}{Guassian Markov Random Field}
\acrodef{CCDF}{complementary cumulative distribution function}
\acrodef{CF}{characteristic function}
\acrodef{PPP}{Poisson point processe}
\acrodef{RV}{random variable}
\acrodef{i.i.d.}{independent and identically distributed}
\acrodef{PDF}{probability distribution function}
\acrodef{CDF}{cumulative distribution function}
\acrodef{ch.f.}{characteristic function}
\acrodef{AWGN}{additive white Gaussian noise}
\acrodef{SNR}{signal-to-noise ratio}
\acrodef{LRT}{likelihood ratio test}
\acrodef{DRT}{distance ratio test}
\acrodef{GLRT}{generalized likelihood ratio test}
\acrodef{CRLB}{Cram\'{e}r-Rao lower bound}
\acrodef{CRB}{Cram\'{e}r-Rao bound}
\acrodef{ZZLB}{Ziv-Zakai lower bound}
\acrodef{ZZB}{Ziv-Zakai bound}
\acrodef{LOS}{line-of-sight}
\acrodef{ToF}{time-of-flight}
\acrodef{NLOS}{non-line-of-sight}
\acrodef{GDOP}{geometric dilution of precision}
\acrodef{GPS}{Global Positioning System}
\acrodef{FIM}{Fisher information matrix}
\acrodef{PEB}{position error bound}
\acrodef{SPEB}{squared position error bound}
\acrodef{TOA}{time-of-arrival}
\acrodef{TOF}{time-of-flight}
\acrodef{WSN}{wireless sensor network}
\acrodef{MAC}{medium access control}
\acrodef{RSS}{received signal strength}
\acrodef{WAF}{wall attenuation factor}
\acrodef{TDOA}{time difference-of-arrival}
\acrodef{RF}{radiofrequency}
\acrodef{RTT}{round-trip time}
\acrodef{AOA}{angle-of-arrival}
\acrodef{MF}{matched filter}
\acrodef{ED}{energy detector}
\acrodef{ML}{maximum likelihood}
\acrodef{MSE}{mean-square error}
\acrodef{RMSE}{root-mean-square error}
\acrodef{LEO}{localization error outage}
\acrodef{ppm}{part-per-million}
\acrodef{ACK}{acknowledge}
\acrodef{UWB}{Ultrawide bandwidth}
\acrodef{TNR}{threshold-to-noise ratio}
\acrodef{LS}{least squares}
\acrodef{IR-UWB}{impulse radio UWB}
\acrodef{FCC}{Federal Communications Commission}
\acrodef{TH}{time-hopping}
\acrodef{PPM}{pulse position modulation}
\acrodef{MUI}{multi-user interference}
\acrodef{PDP}{power delay profile}
\acrodef{BPZF}{band-pass zonal filter}
\acrodef{SIR}{signal-to-interference ratio}
\acrodef{SINR}{signal-to-interference-plus-noise ratio}
\acrodef{RFID}{radio frequency identification}
\acrodef{WPAN}{wireless personal area network}
\acrodef{WWB}{Weiss-Weinstein bound}
\acrodef{DP}{direct path}
\acrodef{MF}{matched filter}
\acrodef{MMSE}{minimum-mean-square-error}
\acrodef{SBS}{serial backward search}
\acrodef{SBSMC}{serial backward search for multiple clusters}
\acrodef{NBI}{narrowband interference}
\acrodef{WBI}{wideband interference}
\acrodef{INR}{interference-to-noise ratio}
\acrodef{CR}{channel response}
\acrodef{CIR}{channel impulse response}
\acrodef{CR}{channel  response}
\acrodef{RADAR}{radar}
\acrodef{MUR}{Multistatic radar}
\acrodef{JBSF}{jump back and search forward}
\acrodef{HDSA}{high-definition situation-aware}
\acrodef{RRC}{root raised cosine}
\acrodef{ST}{simple thresholding}
\acrodef{BTB}{Bellini-Tartara bound}
\acrodef{P-Max}{$P$-Max}  
\acrodef{MIMO}{multiple-input multiple-output}
\acrodef{MAP}{maximum a posteriori}
\acrodef{FG}{factor graph}
\acrodef{OP}{outage probability}
\acrodef{WED}{wall extra delay}
\acrodef{RMS}{root mean square}
\acrodef{SPAWN}{sum-product algorithm over a wireless network}
\acrodef{MDD}{minimum distance distribution}
\acrodef{MAP}{maximum a posteriori probability}
\acrodef{SAP}{small cell access point}
\acrodef{UE}{user equipment}
\acrodef{MBS}{macro cell base station}
\acrodef{UER}{\ac{UE} Relay}
\acrodef{D2D}{device-to-device}
\acrodef{MBS}{macro base station}
\acrodef{CSI}{channel state information}
\acrodef{OGR}{outage guard region}
\acrodef{FUR}{feasible UER region}
\acrodef{EHR}{energy harvesting region}
\acrodef{EH}{energy harvesting}
\acrodef{D2D-EHSN}{D2D communication provided \ac{EH} small cell network}
\acrodef{D2D-EHHN}{D2D communication provided \ac{EH} heterogeneous network}
\acrodef{3GPP}{3rd Generation Partnership Project}
\acrodef{BS}{base station}
\acrodef{DF}{decode and forward}
\acrodef{CCDF}{complementary cumulative distribution function}
\acrodef{ZF}{zero forcing}
\acrodef{RZF}{regularized zero forcing}
\acrodef{WLLN}{weak law of large number}
\acrodef{SLLN}{strong law of large numbers}
\acrodef{TDD}{Time-division duplex}
\acrodef{EE}{energy efficiency} 
\acrodef{HetNet}{heterogeneous network} 
\acrodef{SCP}{Single Cell Processing}
\acrodef{CBF}{Coordinated Beamforming}
\DeclareMathAlphabet{\mathsf}{OML}{cmbr}{m}{it}
\newtheorem{theorem}{\bf Theorem}
\newtheorem{lemma}{\bf Lemma}
\DeclareMathOperator{\tr}{\mathrm{tr}}
\newcommand{\bd}{\begin{description}}
\newcommand{\ed}{\end{description}}
\newcommand{\be}{\begin{enumerate}}
\newcommand{\ee}{\end{enumerate}}
\newcommand{\bi}{\begin{itemize}}
\newcommand{\ei}{\end{itemize}}
\newcommand{\bl}{\begin{list}}
\newcommand{\el}{\end{list}}
\newcommand{\bt}{\begin{tabbing}}
\newcommand{\et}{\end{tabbing}}
\newcommand{\paperTitle}{Joint Network Topology Inference via Structured Fusion Regularization}
\begin{document}

{
\title{\paperTitle}
\author{

	    Yanli~Yuan, \textit{Student Member, IEEE,}         
        De Wen~Soh,
        Xiao~Yang,  
        Kun~Guo, \textit{Member, IEEE,}\\
        and Tony~Q.~S.~Quek, \textit{Fellow, IEEE}
       
\thanks{Yanli~Yuan, De~Wen~Soh, Kun~Guo and Tony~Q.~S.~Quek are with the Singapore University of Technology and Design, Singapore (e-mail:~yanli\_yuan@mymail.sutd.edu.sg, dewen\_soh@sutd.edu.sg,  kun\_guo@sutd.edu.sg,  and tonyquek@sutd.edu.sg).}
\thanks{ Xiao Yang is with the State Key Laboratory
of Integrated Service Institute of Information Science, Xidian University, Xi’an, Shaanxi, 710071, China (e-mail:XYang\_2@stu.xidian.edu.cn.)}

    }
\maketitle
\acresetall
\thispagestyle{empty}
\begin{abstract}
Joint network topology inference represents a canonical problem of jointly learning multiple graph Laplacian matrices from heterogeneous graph signals. In such a problem, a widely employed assumption is that of a simple common component shared among multiple graphs. However, in practice, a more intricate topological pattern, comprising simultaneously of \textit{homogeneity} and \textit{heterogeneity} components, would exhibit in multiple graphs. In this paper, we propose a general graph estimator based on a novel structured fusion regularization that enables us to jointly learn multiple graph Laplacian matrices with such complex topological patterns, and enjoys both high computational efficiency and rigorous theoretical guarantees. Moreover, in the proposed regularization term, the topological pattern among graphs is characterized by a Gram matrix, which enables our graph estimator to flexibly model different types of topological patterns through different Gram matrix choices. Computationally, the regularization term, coupling the parameters together, makes the formulated optimization
problem intractable, and thus, we develop an algorithm based on the alternating direction method of multipliers (ADMM) to solve it efficiently. Theoretically, non-asymptotic statistical analysis is provided, which proves the consistency of the joint graph estimator and illustrates the impact of the data sample size and the graph structures as well as the topological patterns on the estimation accuracy. Finally, the superior performance of the proposed method is demonstrated through simulated and real data examples.

%
\begin{IEEEkeywords}
Graph learning, heterogeneous graph signals,
joint network topology inference, structured fusion regularization, topological
patterns.
\end{IEEEkeywords}

\end{abstract}

\acresetall

\section{Introduction}
Modern data analysis tasks typically involve large sets of structured data that reside on real-world networks such as social networks, wireless communication networks, and transportation networks \cite{Acemap209872764,angles2008survey}. Graphs are appealing mathematical tools used in various fields to represent and analyze the said structured data (often conceptualized as graph signals). For example, in the field of graph signal processing (GSP) \cite{shuman2013gsp,8347162}, many models such as filtering\cite{shuman2020localized}, wavelet transformation \cite{hammond2011wavelets} exploit the graph topology to effectively process graph signals. However, in many practical cases, nodal observations (i.e., measurements of graph signals) are unorganized, and the underlying graph topology might (often) not be explicitly available. Therefore, the first key step is to use the observed data to infer or learn the underlying graph topology, hence facilitating downstream data analysis tasks.

To learn the underlying graph topology from data, many graph learning methods have been developed, see recent review papers \cite{8700665,8700659}. The focus so far in the literature has been on learning a \textit{single} graph Laplacian matrix, by assuming a signal model linking all observations to a single unknown graph \cite{7953415,8309394,egilmez2018graph,7979524,kalofolias2016learn,yuan2019learning}. However, in many applications, we often need to deal with heterogeneous graph signals that come from several networks defined over the same node set, but with possibly different sets of edges.  Examples of such heterogeneous graph signals include gene expression data from subjects with different stages of the same disease \cite{gene}, webpages collected from a university department with different categories corresponding to the faculty, student, course \cite{Cravenweb}, user profiles observed from social networks where
the same set of users can have different types of social interactions \cite{bindu2017discovering}, to name just a few. In these cases, it is more realistic to jointly infer multiple graphs so as to improve the estimation efficiency by exploiting common statistics of data from different classes \cite{navarro2020joint, 8335493}. 

Nevertheless, joint inference of multiple graphs is challenging mainly because it requires the joint graph estimator to be powerful enough to capture the complex topological patterns among networks, while also being general enough to discover various types of topological patterns in different scientific fields. On one hand, this is due to the fact that real-world networks are usually complex, and thus an intricate topological pattern, comprising simultaneously of \textit{homogeneity} and \textit{heterogeneity} components, would exhibit in multiple related networks. For example, the conditional dependence relationships among words in webpages are expected to share commonality across different categories (webpage homogeneity) while maintaining some levels of uniqueness within each category (webpage heterogeneity)\cite{Cravenweb}. Hence, if the joint graph estimator is able to capture complex topological patterns among related networks, it could better exploit additional sources of information and, thereby resulting in more accurate topology estimates. On the other hand, in practice, the multiple networks of interest can possibly be indexed by time, different subjects, demographic variables, or sensing modalities, accordingly, the characteristics of the topological patterns would vary with applications \cite{boccaletti2006complex,newman2001structure}. For example, the case in genomics \cite{gene}, where the gene regulatory networks for different patients are almost identical except that a handful of edges vary among them (see Fig. \ref{figure:illustrationstructuralpattern}(a) as an illustration); and the case in social networks \cite{bindu2017discovering}, where the friendship connections of some people would randomly change at different time windows (illustrated in Fig.\ref{figure:illustrationstructuralpattern}(b)).
Therefore, for joint network topology inference, it is useful to seek graph estimators that are generic, such that they can be flexibly applied to learn graphs with different types of complex topological patterns for different applications. Unfortunately, the existing
researches always employ an assumption of a simple common
component shared among multiple networks, e.g., a smoothly
time-varying topological pattern \cite{7953415Kalofolias,baingana2016tracking, 2020Time,cardoso2020learning}, regardless of many
other desired types of intricate topological patterns in different
applications. 

\begin{figure*}[t]
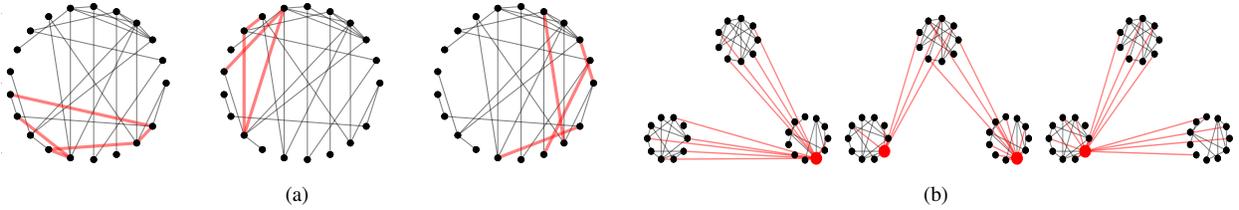
 
\setlength{\belowcaptionskip}{-0.5cm}
\centering
\subfigure[]{
\includegraphics[width=0.9 \columnwidth , height=0.27 \columnwidth]{Figure/Fig_1.pdf}
}
\hspace{0.2ex}
\subfigure[]{
\includegraphics[width=0.9 \columnwidth, height=0.25 \columnwidth]{Figure/Fig_2.pdf}
}

\setlength{\belowcaptionskip}{-0.2cm}
\caption{The illustration of two types of complex topological patterns among three networks. The black lines are the edges shared in all three networks, while the red lines represent the unique edges. Generally, in both two cases, the topological patterns are complex, consisting of both heterogeneity and homogeneity components among the three networks. However, the characteristics of the two topological patterns are different. (a) The three networks are almost identical except that a handful of edges varies among them. (b) There include some nodes randomly rewiring most of their edges in each network. }
\label{figure:illustrationstructuralpattern}
\end{figure*}

In this paper, to address the above challenges, we introduce a powerful and general graph learning framework to jointly learn multiple graph Laplacian matrices from high-dimensional heterogeneous
graph signals. Our framework can simultaneously achieve three goals: (1) modelling various types of complex topological patterns among networks flexibly, (2) computational efficiency, and (3) theoretical
guarantees. More detailedly, the contributions are as follows: 

First, we propose a novel graph estimator based on a structured fusion regularization. Such a regularization term helps incorporate the topological patterns among networks into the learning process, which  endows the graph estimator the power of capturing complex network topologies, thereby improving the estimation accuracy. Specifically, the proposed regularization term combines a $\ell_1$-norm with a weighted $\ell_2$-norm. On one hand, the $\ell_1$-norm encourages the sparsity in each learned
graph Laplacian matrix. On the other hand, the weighted $\ell_2$-norm represents a family of group-structured norms \cite{negahban2009unified}, each of which is constructed by a Gram matrix. The Gram matrix typically reflects the in-group and across-group correlations between the edges of different graphs, which has a natural tendency of fusing each group of edges according to their correlations. Thus, the Gram matrix can be chosen depending on the context to model the specific topological pattern needed for network analytics. Naturally, different choices of the Gram matrix can induce different topological patterns, which leads to great flexibility in learning various types of networks in different fields. In this paper, we specify three choices of the Gram matrix as instances: e.g., the group graph lasso \cite{8347160}, time-varying graph lasso \cite{2020Time}, and Laplacian shrinkage penalty \cite{huang2011} (see Section \ref{sec:proposedestimator} for details).  In addition, our estimator can obviate the curse of
dimensionality in high-dimensional settings, since it's able to improve statistical efficiency through fusing information from related graphs. 

Second, we develop an algorithm based on the Alternating Direction Method of Multipliers (ADMM) \cite{2010Distributed} to compute the proposed graph estimator. This is necessary because the structured fusion regularization term couples the parameters together, even though efficient methods exist to solve the single graph learning problem \cite{8747497, 9414791}, no ready-to-use methods exist for our coupled multiple cases. To develop the ADMM solver, we decouple the parameters fused in the regularization term and split the joint formulation into several subproblems. We also derive closed-form solutions for each of the subproblems, and thus our ADMM solver can be implemented efficiently. 

Third, theoretical guarantees are established for the structured fusion graph estimator. We provide a non-asymptotic estimation error bound under the high-dimensional setting, which illustrates the impact of several key factors, such as the sample size, the signal dimension, the graph structure, and the topological pattern, on the graph estimation accuracy. Such a statistical analysis provides theoretical evidence that the proposed structured fusion regularization in the joint formulation is able to take advantage of additional sources of information from related graphs to improve statistical efficiency. 

Finally, we apply the proposed graph estimator on both synthetic and real datasets. Experimental results show that our graph estimator can not only find meaningful network topologies from heterogeneous graph signals, but also discover different types of topology patterns in different applications.

The outline of the paper is as follows. We first describe some related works in Section \ref{sec:relatedwork}, and then provide the preliminaries and problem formulation in Section \ref{sec:Preliminaries}. The general graph estimator is introduced in Section \ref{sec:framework}, where we elaborate on the generality of structured fusion regularization and the derivation of the ADMM-based optimization algorithm. Theoretical analysis and experimental results are presented in Section \ref{sec: theorectical} and \ref{sec:experiment}, respectively. We conclude the paper in Section \ref{sec:conclustion}.

\section{Related Work} \label{sec:relatedwork}
In this section, we summarize the differences between our
work and some representative related works on jointly learning multiple graphs.

Most previous approaches for multiple graph learning are based on statistical models. For example, several methods are based on the Gaussian graphical models (GGMs), which jointly learn the precision matrices for the Gaussian-Markov random fields (GMRFs) \cite{danaher2014joint,huang2015joint,hallac2017network, shan2018joint,9277914}. Note that these works focus on learning the precision matrices without assuming the Laplacian constraints. As a result, the learned graphs have both positive and negative edge weights, which may be not suitable for interpreting the structures of data in some contexts, such as the correlations between asset returns in the financial market \cite{10.1093/jjfinec/nbaa018,cardoso2020learning}. In contrast, our proposed method aims to learn the combinatorial graph Laplacian matrices, which can be used to perform harmonic analysis of graph signals and are often desirable in many applications, especially in the field of graph signal processing (GSP) \cite{shuman2013gsp,Dong2016LearningLM}. 

In recent years, there has been clearly a growing interest in learning multiple graphs from a GSP perspective. Generally, the GSP-based graph learning methods can be cast into two categories. One family of approaches focuses on the graph signal properties, which usually assume the signals are stationary or smooth in the sought graphs\cite{8335493,navarro2020joint,saboksayr2021online}.  Another family of approaches focuses on the graph topology properties, such as the most widely studied time-varying graphs\cite{7953415Kalofolias,baingana2016tracking, 2020Time,cardoso2020learning}. Our formulation is a generalization of some of the existing GSP-based graph learning methods. On one hand, in our formulation, we adopt the graph filtered signal models (functions of graph Laplacians), which cater to various classes of graph signals with different frequency properties\cite{egilmez2018graph}. On the other hand, in addition to the time-varying graphs, our proposed structured fusion regularization can flexibly model different types of topological patterns through different Gram matrix choices, making some existing graph estimators as special cases of ours. Please see Section IV-A for details. Moreover, in most existing works, the focus is on problem formulation and algorithmic design, without providing theoretical guarantees of the estimation accuracy. In this paper, non-asymptotic statistical analysis is provided, which proves the consistency of the graph estimator and enables us to investigate the effect of several key factors on the graph estimation error bound.

With regard to the algorithmic design in graph learning, some researchers have recently developed efficient ADMM-based algorithms to solve the single graph learning formulations \cite{8747497,9414791}. However, in our joint formulation with the structured fusion penalty, there are more parameters, additional coupling, and more complicated topological patterns, thus the single graph learning methods cannot be directly applied to solve our multiple cases. Our paper develops a more unified ADMM-based algorithm so that the same framework can incorporate all the different topological patterns that we study. Besides, it covers the single graph learning algorithm \cite{8747497} as a special case of ours, please see Section \ref{sec:optimization} for detailed discussions.

\textbf{Notations:} Boldface upper-case or lower-case letters represent matrices and column vectors, and standard lower-case or upper-case letters
stand for scalars. $\mathbf{A} \succeq 0$ means $\mathbf{A}$ is a positive semi-definite matrix and $\mathbf{A}\geq 0$ means $\mathbf{A}$ is elementwisely larger than $0$. $\sigma_{\mathrm{max}}(\mathbf{A})$ and $\sigma_{\mathrm{min}}(\mathbf{A})$ denote the maximum and minimum singular value of matrix $\mathbf{A}$.   $\mathbf{A}^{\top}, \mathbf{A}^{-1}, \mathbf{A}^{\dagger}, \operatorname{tr}(\mathbf{A}),$  $\operatorname{det}(\mathbf{A})$, and $|\mathbf{A}|_{+}$ denote
the transpose, inverse, pseudo-inverse, trace, determinant, and pseudo-determinant of $\mathbf{A},$ respectively. The vector consisting of all the diagonal elements of $\mathbf{A}$ is denoted by  $\operatorname{diag}(\mathbf{A})$, the $(i,j)$-th entry of $\mathbf{A}$ is denoted by $\mathrm{A}_{ij}$ or $[\mathbf{A}]_{ij}$. The spectral norm and the Frobenius norm of $\mathbf{A}$ are expressed as $\|\mathbf{A} \|_{2}$ and $\|\mathbf{A} \|_{\mathrm{F}}$, respectively. $\|\cdot\|_{p}$ denotes the $\ell_{p}$-norm of a vector. We define $\|\cdot\|_{1,\mathrm{off}}$ as the $\ell_{1}$-norm applied to the off-diagonal matrix entries. $\mathbf{1}$ stands for the all-one vector, and $\mathbf{I}$ stands for the identity matrix. $\odot$ stands for the Hadamard
product, $[K]$ denotes the integer set $\lbrace 1, \cdots, K \rbrace$, and $\text{card}(\cS)$ denotes the number of elements in the set $\cS$. $I(\cdot)$ denotes as an indicator function. Besides, we have $[x]_{+}=\max(x,0)$, $[x]_{-}=\min(x,0)$, and $\langle \mathbf{a},\mathbf{b}\rangle=\mathbf{a}^{\top}\mathbf{b}$ denoting the inner product of vector $\mathbf{a}$ and $\mathbf{b}$.

\section{Preliminaries and Problem formulation} 
 \label{sec:Preliminaries}
\subsection{Fundamentals of GSP}
The design of a graph learning method is based on a properly defined signal model. Here, we introduce the fundamentals of 
GSP that are used to define signal models. Suppose a weighted and undirected graph is represented by a triple $G=(\mathcal{V},\mathcal{E}, \mathbf{L})$, where $\mathcal{V}$ is a finite set of $p$ nodes, $\mathcal{E} \in \mathcal{V} \times \mathcal{V} $ is the edge set, and $\mathbf{L} \in \mathbb{R}^{p\times p} $ is the combinatorial graph Laplacian matrix. The absolute value of the $(i,j)$-th entry of $\mathbf{L}$ is denoted as $|\mathrm{L}_{ij}|$, representing the edge weight between node $i$ and $j$. Since we assume no self-loops in the graph, it satisfies that $ \mathbf{L} \cdot \mathbf{1}=\mathbf{0}$, thereby $\mathbf{L}$ is a symmetric semi-positive definite matrix. In general, the valid set of graph Laplacian matrices is written as 
\begin{equation}
\mathcal{L}=\left\{\mathbf{L}\in \mathbb{R}^{p\times p} \mid \mathbf{L} \succeq 0, \mathbf{L} \cdot \mathbf{1}=\mathbf{0}, \mathrm{L}_{i j}=\mathrm{L}_{j i} \leq 0, i \neq j \right\}.
\label{eq:laset}
\end{equation}

A graph signal is referred as a function $\mathbf{x}: \mathcal{V} \rightarrow \mathbb{R}^p$ that assigns a real value to each graph node. In this paper, we consider a flexible signal model defined by using general filtering operations of signals on graph. Formally, let the eigendecomposition of a graph Laplacian be $\mathbf{L}=\mathbf{U}\mathbf{\Lambda}\mathbf{U}^T$,
then the graph Fourier transform (GFT) of a signal $\mathbf{x}$ is defined as $\hat{\mathbf{x}}=\mathbf{U}^T\mathbf{x}$. Equivalently, the inverse GFT becomes $\mathbf{x}=\mathbf{U}\hat{\mathbf{x}}$.
Using the notion of GFT, a filtered graph signal can be obtained by the following model \cite{egilmez2018graph}
\begin{equation}
\mathbf{x}=\boldsymbol{\mu}+\mathbf{U} h(\mathbf{\Lambda}) \mathbf{U}^T \mathbf{x}_0=\boldsymbol{\mu}+h(\mathbf{L}) \mathbf{x}_0,\label{eq:smod}
\end{equation}
where $h(\cdot)$ is a class of functions, representing graph-based filters, $\mathbf{x}_0 \in \mathbb{R}^p$ is the input data, and $\boldsymbol{\mu}$ is a constant vector. The above filtered signal model (\ref{eq:smod}) provides an unified representation of data residing on  graphs, in which the graph Laplacian matrix captures pairwise relations between the entries of vectorized data and the specific graph-based filter $h(\mathbf{L})$ measures its frequency characteristics.

\subsection{Graph learning from high-dimensional heterogeneous graph signals}
In this paper, we consider graph signals that are observed from $K$ related, but distinct networks $\cG=\{G_1,\cdots,G_K\}$. Each network is an undirected weighted graph $G_k=(\mathcal{V}, \mathcal{E}_k, \mathbf{L}_k)$ with a set of $p$ nodes and a specific set of edges $ \mathcal{E}_k$ characterized by the Laplacian matrix $\mathbf{L}_k \in \mathcal{L}$.
Given that a graph signal $\mathbf{x} \in \mathbb{R}^p$ is collected from a designated graph $G_k$ and the initial data is a white Gaussian signal $\mathbf{x}_0 \sim \mathcal{N}(\boldsymbol{0},\mathbf{I})$, the observed data is thus a sample from a $p$-variate Gaussian distribution $ \mathcal{N} \Big(\boldsymbol{\mu}_k, h^2(\mathbf{L}_k)\Big)$.
 We adopt a decaying function\footnote{ Many other types of $h(\mathbf{L})$ can be chosen for modelling different characteristics of graph signals, we refer to \cite{egilmez2018graph,kalofolias2016learn} for a detailed discussion of graph-based filter functions.} with form $h(\mathbf{L}_k)=\sqrt{\mathbf{L}_k^{\dagger}}$, which represents a class of smooth graph signals and was also studied in previous works\cite{Dong2016LearningLM,kalofolias2016learn,7953415,7979524}. Under these assumptions, the marginal distribution of $\mathbf{x}$ is given by
\begin{equation}
p_k(\mathbf{x} ) = \mathcal{N}(\boldsymbol{\mu}_k,\mathbf{L}_k^{\dagger}).
 \label{eq:smooth}
\end{equation}
It can be seen from ($\ref{eq:smooth}$) that signals collected from $G_k$ follow a degenerate Gaussian distribution with precision matrix $\mathbf{L}_k$.

Suppose that we have total $n=\sum_{k=1}^Kn_k$ sequence of observations $\mathbf{X}=\{\mathbf{x}_i^{(k)}\}_{k=1}^K$, where $\mathbf{x}_i^{(k)}\in \mathbb{R}^p, i=1,\ldots,n_k$ are independent and identically distributed (i.i.d) samples from  (\ref{eq:smooth}). The goal of graph learning is to learn the Laplacian matrix of each graph, i.e.
$\mathbf{L}_1,\ldots,\mathbf{L}_K$, from the observed data $\mathbf{X}$, which is formulated as maximizing the empirical log-likelihood defined by 
\begin{equation} 
\begin{aligned}
\mathcal{F}_n(\mathbf{L}\mid \mathbf{X})
=&\frac{1}{n} \sum_{k=1}^K\sum_{i=1}^{n_k} \log p_k(\mathbf{x}_i^{(k)}) \\
=&\frac{1}{n} \sum_{k=1}^K n_k \Big[ \log(|\mathbf{L}_k|_+)-\tr(\boldsymbol{\Sigma}_k\mathbf{L}_k)\Big].
\end{aligned}
\end{equation}
Here, $\boldsymbol{\Sigma}_k=\sum_{i=1}^{n_k}(\mathbf{x}_i^{(k)} -\boldsymbol{\mu}_k)(\mathbf{x}_i^{(k)}-\boldsymbol{\mu}_k)^{\top}$ denotes the sample covariance of signals associated with graph $G_k$, and from now on,
$
\mathbf{L}=\{\{\mathbf{L}_k\}_{k=1}^K \mid \mathbf{L}_k \in \mathcal{L}, k\in [K] \}
$
represents the set of $K$ graph Laplacians.

In the high-dimensional regime $Kp^2/2 \gg n$, it is well known that the maximum likelihood estimator is not consistent unless additional constraints are imposed on the model. Besides, our goal is to learn graphs that can exhibit both heterogeneity and homogeneity components. Thus, we generalize the learning problem as that of solving the following optimization problem:
\begin{maxi}  
      {\{\mathbf{L}_k\}_{k=1}^{K}}{\mathcal{F}_n(\mathbf{L} \mid \mathbf{X})-\rho_n \mathcal{P}(\mathbf{L})}{}{}
      \addConstraint{\mathbf{L}_k \in \mathcal{L}, ~k \in [K] }.
     \label{eq:updateL}
    \end{maxi} 
Here, the penalty $\mathcal{P}(\mathbf{L})$ is to be specified so as to characterize the topological patterns among multiple graphs and $\rho_n$ is the regularization parameter. In the following, we propose a structured fusion penalty that enables us to incorporate prior knowledge about network topologies in the learning process and will lead to a more general graph learning formulation.

\section{ joint network topology inference via structured fusion regularization} \label{sec:framework}    
\subsection{The proposed structured fusion penalty} \label{sec:proposedestimator}
To facilitate encoding the topological patterns in the learned graphs, we propose a novel structured fusion penalty, which is defined as
\begin{equation}
\begin{aligned}
\mathcal{P}(\mathbf{L})
&=\mathcal{P}_1(\mathbf{L})+\rho \mathcal{P}_2(\mathbf{L})\\
&= \sum_{i \neq j} \|\mathbf{L}_{ij}\|_1+\rho\sum_{i \neq j}\sqrt{ \mathbf{L}^{\top}_{ij} \widetilde{\mathbf{J}} \mathbf{L}_{ij}}.
\end{aligned}
\label{eq:pengl}
\end{equation}
Here, $\mathbf{L}_{ij}=([\mathbf{L}_1]_{ij},\cdots,[\mathbf{L}_K]_{ij})^{\top} \in \mathbb{R}^K, i,j\in [p]$ is a vector of $(i,j)$-entries across the $K$ graph Laplacians, $\widetilde{\mathbf{J}}=\mathbf{J}^{\top}\mathbf{J} \in \mathbb{R}^{K\times K}$ is a Gram matrix with $\mathbf{J}$ being a given matrix or estimated from prior information, and $\rho$ is a non-negative tunning parameter. 

The proposed structured fusion penalty combines a $\ell_1$-norm with a weighted $\ell_2$-norm. The $\ell_1$ norm $\mathcal{P}_1(\mathbf{L})=\sum_{i \neq j} \|\mathbf{L}_{ij}\|_1=\sum_{k=1}^K\|\mathbf{L}_k\|_{1,\text{off}}$ encourages sparsity in each estimated graph Laplacian. We do not penalize the diagonal elements of each $\mathbf{L}_k$, since $\mathbf{L}_k$ is required to satisfy $\mathbf{L}_k\mathbf{1}=\mathbf{0}$.

The weighted $\ell_2$ norm $\mathcal{P}_2(\mathbf{L})$ represents a family of the group-structured norms which has the natural tendency of fusing each group of coefficients according to their correlations\cite{negahban2009unified}. Specifically, the Gram matrix $\widetilde{\mathbf{J}}=\mathbf{J}^{\top}\mathbf{J}$ reflects some underlying geometry or structure across $K$ graphs, and different choices of $\widetilde{\mathbf{J}}$ allow us to enforce different types of topological patterns among graphs. Hence, if we have some prior knowledge of the network topologies, we are able to encode it into $\widetilde{\mathbf{J}}$. In the following, we specify three choices of $\widetilde{\mathbf{J}}$ as instances\footnote{The choice of $\widetilde{\mathbf{J}}$ depends on the desired topological patterns required by the context, there is a wide variety of interesting applications, and what we present in this paper is not meant to be an exhaustive list but rather a small set of illustrative examples that motivated our work.}. 
\begin{itemize}
  \item \textit{The group graph lasso}. Assuming that $\mathbf{J}=\mathbf{I}$ is an identity matrix, then the penalty takes the form
  \begin{equation} \label{eq:golasso}
  \mathcal{P}_2(\mathbf{L})=\sum_{i \neq j} \sqrt{\sum_{k=1}^K [\mathbf{L}_k]_{ij}^2}.
  \end{equation}
This group lasso penalty encourages the Laplacian matrices of $K$ graphs  to be equally similar to each other.  As a result, this penalty is best used in the cases where we expect only a handful of edges varies among the graphs.
  \item \textit{Time-varying graph lasso}. Given that $\mathbf{J}=\left(\begin{array}{ccccc}
0 & 0 & 0 & \ldots & 0 \\
-\omega_{2,1} & 1 & \ddots & \ddots & \vdots \\
0 & -\omega_{3,2} & 1 & \ddots & 0 \\
\vdots & \ddots & \ddots & \ddots & 0 \\
0 & \ldots & 0 & -\omega_{K,K-1} & 1
\end{array}\right)$ is a weighted difference operator, then the penalty has the form
  \begin{equation} \label{eq:timelasso}
  \mathcal{P}_2(\mathbf{L})= \sum_{i \neq j} \sqrt{\sum_{k=2}^{K} \left([\mathbf{L}_k]_{ij}-\omega_{k,k-1}[\mathbf{L}_{k-1}]_{ij}\right)^2}.
  \end{equation}
The weighted parameter $\omega_{k,k-1}>0, \forall k \in [K]$ measures the temporal changes between the neighboring graphs $G_k$ and $G_{k-1}$. If two adjacent graphs change smoothly over time, then the corresponding $\omega_{k,k-1}=1$;  If there is a sudden shift in the graph topology, then $\omega_{k,k-1}$ will be far away from $1$. Thus, the time-varying graph lasso can be applied to encode dynamic evolutionary patterns in the learned graphs.
  \item \textit{The Laplacian shrinkage penalty}. For any $k,k^{\prime}\in [K]$, let $\omega_{k,k^{\prime}} \geq 0 $ measure the pairwise similarity between graphs $G_k$ and $G_{k^{\prime}}$. If $w_{k,k'} >0$, then the graphs $G_k$ and $G_{k^{\prime}}$ are related; If $\omega_{k,k'} =0$, then the graphs $G_k$ and $G_{k^{\prime}}$ are different enough to be considered independent. Setting the entries of the Gram matrix $\widetilde{\mathbf{J}}$ with value $[\widetilde{\mathbf{J}}]_{k k^{\prime}}=\left\{\begin{array}{ll}
\sum_{k^{\prime} \neq k} \omega_{k, k^{\prime}}, & k=k^{\prime} \\
-\omega_{k ,k^{\prime}}, & k \neq k^{\prime} \\
\end{array}\right.$, then the Laplacian shrinkage penalty is defined as
\begin{equation} \label{eq:laplasso}
\mathcal{P}_2(\mathbf{L})= \sum_{i \neq j} \sqrt{\sum_{k,k^{\prime}=1}^{K} \omega_{k,k^{\prime}}([\mathbf{L}_k]_{ij}-[\mathbf{L}_{k^{\prime}}]_{ij})^2}. 
\end{equation}
When two graphs $G_k$ and $G_{k'}$ share more similar components, then the corresponding tuning parameter $\omega_{k,k'}$ will be larger, which encourages many elements of $\mathbf{L}_k, \mathbf{L}_{k'}$ to be identical. This penalty is best used in situations when some graphs are expected to be more similar to each other than others. 

\textbf{Remark 1}: In particular, when $\omega_{k,k'} \rightarrow \infty$, the problem (\ref{eq:updateL}) with the penalty function stated in (\ref{eq:laplasso}) reduces to a single graph learning problem. On the other hand, when $\omega_{k,k'} \rightarrow 0$, the problem  (\ref{eq:updateL}) with the penalty function stated in (\ref{eq:laplasso}) reduces to learning $K$ graphs separately.
\end{itemize}

The above examples highlight the generality of the proposed weighted $\ell_2$ norm, which allows for incorporating various types of prior information of data into the learning process. Namely, any specific topological patterns among graphs, such as a desired level of group sparsity, or a group of time-varying edges, can easily be encoded in the Gram matrix, rendering our graph estimator the ability to model different types of topological patterns flexibly. 

The combined effect of the $\ell_1$ and the weighted $\ell_2$ norm in the proposed penalty is to find estimates of multiple graphs comprising simultaneously of homogeneity and heterogeneity components, which may be desirable in many settings. The simulation
results in Section \ref{sec:experiment} show that the proposed estimator can result in meaningful network topologies.

\subsection{The proposed optimization algorithm} \label{sec:optimization}
With the penalty function stated in (\ref{eq:pengl}), we can obtain a generic formulation for jointly learning multiple graph Laplacians, which is given by
\begin{equation}
\begin{aligned}
\underset{\{\mathbf{L}_k\}_{k=1}^{K}}{ \text{minimize} }~~~ &  \frac{1}{n} \sum_{k=1}^{K} n_k \Big[- \log(|\mathbf{L}_k|_+)+\tr(\boldsymbol{\Sigma}_k\mathbf{L}_k)\Big]
 \\
&+\rho_n \sum_{k=1}^{K} \|\mathbf{L}_{k}\|_{1,\text{off}} +\rho_n\rho\sum_{i \neq j}\|\mathbf{J}\mathbf{L}_{ij}\|_2,  \\
&\begin{array}{l}
\text { s.t. } 
\mathbf{L}_k \in \mathcal{L},k \in [K].
\end{array}
\end{aligned}
\label{eq:nfye}
\end{equation}

 As the regularization term $\sum_{i \neq j}\|\mathbf{J}\mathbf{L}_{ij}\|_2$ makes the parameters in $
\mathbf{L}=\{\mathbf{L}_k\}_{k=1}^K
$ coupled together, it is intractable to solve the whole problem at once. Hence, we propose an algorithm called JEMGL (Jointly Estimation of Multiple Graph Laplacians), which is based on the ADMM method \cite{2010Distributed}, to solve the problem (\ref{eq:nfye}) efficiently. To derive the ADMM solver for the problem (\ref{eq:nfye}), we first decouple the fused parameters and split the problem into a series of sub-problems. We then derive closed-form solutions for each of the sub-problems, and finally, we use a message-passing algorithm to converge on the globally optimal solution.  In the following, we elaborate on the derivation of the ADMM solver.

\textbf{ADMM Solver}:
Setting $\mathbf{M}=\frac{1}{p}\mathbf{1}\mathbf{1}^{\top}$, in \cite{hassan2016topology}, it has been proved that
\begin{equation}
\log \operatorname{det}\left(\mathbf{L}_k+\frac{1}{p} \mathbf{1} \mathbf{1}^{\top}\right)=\log \left(1 \times \prod_{i=2}^{N} \lambda_{i}\right)=\log \left(|\mathbf{L}_k|_{+}\right), 
\end{equation}
so $\log(|\mathbf{L}_k|_+)= \log \det (\mathbf{L}_k + \mathbf{M})$. Let $\mathbf{H}_k=\frac{n \rho_n}{n_k}(\mathbf{I}-\mathbf{1}\mathbf{1}^{\top})$, then $\frac{n \rho_n}{n_k}  \|\mathbf{L}_{k}\|_{1,\text{off}}=\tr(\mathbf{H}_k\mathbf{L}_k)$, and let $\mathbf{Q}_k=\mathbf{\Sigma}_k+\mathbf{H}_k$. Hence, the problem (\ref{eq:nfye}) is equivalent to 
\begin{equation}
\begin{aligned}
 \underset{\{\mathbf{L}_k\}_{k=1}^{K}}{ \text{minimize} }~~~ & \sum_{k=1}^{K} \frac{n_k}{n}  \Big[ -\log \det (\mathbf{L}_k+\mathbf{M})+\tr(\mathbf{Q}_k\mathbf{L}_k)\Big]\\
 &+\rho_n\rho\sum_{i \neq j}\|\mathbf{J}\mathbf{L}_{ij}\|_2,  \\
&\begin{array}{l}
\text { s.t. } 
\mathbf{L}_k \in \mathcal{L}, k \in [K].
\end{array}
\end{aligned}
\label{eq:nfye1}
\end{equation}

 In order to decouple the fused matrices in $\mathbf{L}$ , we introduce two consensus variable sets $\mathbf{A}=\lbrace \mathbf{A}_k  \in \mathbb{R}^{p \times p} \rbrace _{k=1}^K, \mathbf{B}=\lbrace \mathbf{B}_k  \in \mathbb{R}^{p \times p} \rbrace _{k=1}^K$.  Define $\mathbf{a}_{ij}\equiv ([\mathbf{A}_1]_{ij},\cdots,[\mathbf{A}_K]_{ij})^{\top} \in \mathbb{R}^{K}$, $\mathbf{b}_{ij}\equiv ([\mathbf{B}_1]_{ij},\cdots,[\mathbf{B}_K]_{ij})^{\top} \in \mathbb{R}^{K}, i,j=1,\cdots,p$. To dealt with the constraints set $\mathcal{L}$, we adopt the method developed in \cite{8747497}.
Let $\mathbf{C}=\mathbf{1}\mathbf{1}^{\top}-\mathbf{I}$, and define a matrix set
\begin{equation}
\mathcal{B}=\{\tilde{\mathbf{B}} \in \mathbb{R}^{p\times p} \mid \mathbf{I} \odot \tilde{\mathbf{B}} \geq 0,\mathbf{C} \odot \tilde{\mathbf{B}} \leq 0 \}.
\end{equation}
Then, the constraint set $\mathcal{L}$ stated in (\ref{eq:laset}) can be compactly rewritten as
\begin{equation}
\mathcal{L}=\{\mathbf{S} \in \mathbb{R}^{p\times p} \mid \mathbf{S} =\mathbf{P}\bar{\mathbf{\Xi}}\mathbf{P}^{\top}, \bar{\mathbf{\Xi}} \succeq \mathbf{0}, \mathbf{S} \in \mathcal{B} \},
\end{equation}
where $\mathbf{P} \in \mathbb{R}^{p\times (p-1)}$ is the orthogonal complement of the vector $\mathbf{1}$. Define a positive semi-definite matrix set $\mathbf{\Xi}=\lbrace \mathbf{\Xi}_k  \in \mathbb{R}^{p \times p} \mid \mathbf{\Xi}_k\succeq \mathbf{0} \rbrace_{k=1}^K$. Thereby, we have $\mathbf{L}_k =\mathbf{P}\mathbf{\Xi}_k\mathbf{P}^{\top}$, and 
\begin{equation}
\begin{aligned}
\log \det (\mathbf{L}_k+\mathbf{M})&=\log \det (\mathbf{\Xi}_k), \\
\tr(\mathbf{Q}_k\mathbf{L}_k)&=\tr(\tilde{\mathbf{Q}}_k\mathbf{\Xi}_k),
\end{aligned}
\end{equation}
with $\tilde{\mathbf{Q}}_k=\mathbf{P}^{\top}\mathbf{Q}_k\mathbf{P}$.
 To facilitate the computation, we assume $\widetilde{\mathbf{J}} =\mathbf{J}^{\top}\mathbf{J}$ is a positive definite matrix. Thus, the problem (\ref{eq:nfye1}) is equivalent to 
\begin{equation}
\begin{aligned}
\underset{\{\mathbf{\Xi},\mathbf{A}, \mathbf{B}\}}{ \text{minimize} } ~~~ & \sum_{k=1}^{K} \frac{n_k}{n}  \Big[- \log \det (\mathbf{\Xi}_k)+\tr(\tilde{\mathbf{Q}}_k\mathbf{\Xi}_k)\Big] \\
 &+\rho_n\rho\sum_{i \neq j}\|\mathbf{J}\mathbf{a}_{ij} \|_2,  \\
&\begin{array}{l}
\text { s.t. } ~~\mathbf{\Xi}_k \succeq \mathbf{0}, \quad k \in [K], \\
~~~~~~~\mathbf{P}\mathbf{\Xi}_k \mathbf{P}^{\top}=\mathbf{B}_k,\quad k \in [K],\\
~~~~~~~\mathbf{J}\mathbf{a}_{ij}=\mathbf{J}\mathbf{b}_{ij},\quad i,j \in [p], \text{and}~i \neq j, \\
~~~~~~~\mathbf{B}_k \in \mathcal{B},\quad k \in [K].\\
\end{array}
\end{aligned}
\label{eq:nfye2}
\end{equation}

Let $\mathbf{E}=\{ \mathbf{E}_{k}\in \mathbb{R}^{p \times p} \}_{k=1}^K$ and $\mathbf{F}=\{ \mathbf{F}_{k}\in \mathbb{R}^{p \times p} \}_{k=1}^K$ denote Lagrange multiplier matrices. Define $\mathbf{e}_{ij}\equiv \{[\mathbf{E}_1]_{ij},\cdots,[\mathbf{E}_K]_{ij} \} \in \mathbb{R}^{K}$ and $\mathbf{f}_{ij}\equiv ([\mathbf{F}_{1}]_{ij},\cdots,[\mathbf{F}_{K}]_{ij})^{\top} \in \mathbb{R}^{K},i,j=1,\cdots,p$. Then the corresponding (partial) augmented
Lagrangian is given by
\begin{equation}
\begin{aligned}
& \mathcal{L}_\rho(\mathbf{\Xi},\mathbf{A},\mathbf{B},\mathbf{E},\mathbf{F})\\
  =& \sum_{k=1}^{K} \frac{n_k}{n}  \Big[- \log \det (\mathbf{\Xi}_k)+\tr(\tilde{\mathbf{Q}}_k\mathbf{\Xi}_k)\Big] \\
 &+\rho_n\rho\sum_{i \neq j}\|\mathbf{J}\mathbf{a}_{ij} \|_2  \\
 & +\sum_{k=1}^{K}\Big [\tr\Big(\mathbf{E}_k^{\top}(\mathbf{P}\mathbf{\Xi}_k \mathbf{P}^{\top}-\mathbf{B}_k)\Big) +\frac{\rho}{2}  \| \mathbf{P}\mathbf{\Xi}_k \mathbf{P}^{\top}-\mathbf{B}_k\|_{\mathrm{F}}^2\Big]\\
 &+\sum_{i \neq j} \Big[\mathbf{f}_{ij}^{\top}(\mathbf{J}\mathbf{a}_{ij}-\mathbf{J}\mathbf{b}_{ij})+\frac{\rho}{2} \|\mathbf{J}\mathbf{a}_{ij}-\mathbf{J}\mathbf{b}_{ij}\|_2^2\Big],\\
\end{aligned}
\label{eq:scaleLagrang}
\end{equation}
where $\rho >0$ is the ADMM penalty parameter.
ADMM consists of the following
updates, where $m$ denotes the iteration number:
\begin{equation}
\left\{\begin{array}{l}
\mathbf{\Xi}^{(m+1)}:=\underset{\mathbf{\Xi} \succeq 0}{\arg \min}  \mathcal{L}_\rho(\mathbf{\Xi},\mathbf{A}^{(m)},\mathbf{B}^{(m)},\mathbf{E}^{(m)},\mathbf{F}^{(m)}),\\
\mathbf{A}^{(m+1)}:=\underset{\mathbf{A}} {\arg \min } \mathcal{L}_\rho(\mathbf{\Xi}^{(m+1)},\mathbf{A},\mathbf{B}^{(m)},\mathbf{E}^{(m)},\mathbf{F}^{(m)}),\\
\mathbf{B}^{(m+1)}:=\underset{\mathbf{B}}{\arg \min}   \mathcal{L}_\rho(\mathbf{\Xi}^{(m+1)},\mathbf{A}^{(m+1)},\mathbf{B},\mathbf{E}^{(m)},\mathbf{F}^{(m)}).
\end{array}\right.
\end{equation}

\subsubsection{Update of $\mathbf{\Xi}$}
The $\mathbf{\Xi}$-step can be split into separate updates for each $\mathbf{\Xi}_k$ , which can then be solved in parallel:
\begin{equation}
\begin{aligned}
\underset{\mathbf{\Xi}_{k} \succeq 0}{ \text{minimize} }  &-\log \det (\mathbf{\Xi}_k) +\tr(\tilde{\mathbf{Q}}_k\mathbf{\Xi}_k)+\tr(\mathbf{P}^{\top}\mathbf{E}_k^{(m)}\mathbf{P}\mathbf{\Xi}_k) \\ 
&+\frac{\rho}{2}  \| \mathbf{P}\mathbf{\Xi}_k \mathbf{P}^{\top}-\mathbf{B}_k^{(m)}\|_{\mathrm{F}}^2.  \\ 
\end{aligned}
\label{eq:ADMM-L}
\end{equation} 
The problem (\ref{eq:ADMM-L}) can be solved by applying the method developed in pages 46-47 of the book\cite{2010Distributed}, the solution is 
\begin{equation} \label{eq:updatexi}
\mathbf{\Xi}_k^{(m+1)}=\mathbf{V}_k\mathbf{D}_k\mathbf{V}_k^{\top}
\end{equation}
with $\frac{1}{\rho} \mathbf{P}^{\top}(\mathbf{Q}_k+\mathbf{E}_k^{(m)}-\rho \mathbf{B}_k^{(m)}) \mathbf{P}=\mathbf{V}_k \mathbf{\Lambda}_k \mathbf{V}^{\top}_k,$ and $\mathbf{D}_k$ is a diagonal
matrix with $[\mathbf{D}_k]_{i i}=\frac{-\rho \Lambda_{i i}+\sqrt{\rho^{2} \Lambda_{i i}^{2}+4 \rho}}{2 \rho}$
.

\subsubsection{Update of $\mathbf{A}$}
The update of $\mathbf{A}$ is to compute
\begin{equation} \label{eq:updateaij}
\begin{aligned}
\mathbf{a}_{i j}^{(m+1)}=\left[1-\frac{\rho_{n} \rho_{1}}{\rho\left\|\mathbf{J} \mathbf{b}_{i j}^{(m)}+\mathbf{f}_{ij}^{(m)}\right\|_2}\right]_{+}\left(\mathbf{J} \mathbf{b}_{i j}^{(m)}+\mathbf{f}_{ij}^{(m)}\right).
\end{aligned}
\end{equation}

\begin{algorithm}[!t]
\setlength{\belowcaptionskip}{-0.6cm}
\caption{\textbf{:} ADMM-based Algorithm for Joint Estimation of Multiple Graph Laplacians   (JEMGL)  }
\label{algo: ADMMGL}
\hspace*{0.02in} {\bf Input:} 
$\mathbf{P}, \mathbf{J}, \{\mathbf{Q}_k \}_{k=1}^K$, symmetric $\mathbf{A}^{(0)}$,$\mathbf{B}^{(0)}$,$\mathbf{E}^{(0)}$ and $\mathbf{F}^{(0)}$, $\rho_n>0,\rho>0$, m=0; \\ 
\hspace*{0.02in} {\bf Output:} 
 $\hat{\mathbf{L}}$\\
\hspace*{0.02in} {\bf Repeat:}
\begin{algorithmic}[1] 
\FOR {\textit{k}=1,\ldots, \textit{K} (in parallel)}
\STATE{ Update $ \mathbf{\Xi}_k^{(m+1)}$ according to (\ref{eq:updatexi})};\\
\STATE{Update $  \mathbf{L}_k^{(m+1)}=\mathbf{P} \mathbf{\Xi}_k^{(m+1)} \mathbf{P}^{\top}$};\\
\ENDFOR
\STATE{Update $\mathbf{a}_{i j}^{(m+1)}$ according to (\ref{eq:updateaij})};\\
\STATE{When $i=j$, update $\mathbf{b}_{i j}^{(m+1)}$ according to (\ref{eq:updatebii}) };\\
\STATE{ When $i\neq j$, update $\mathbf{b}_{i j}^{(m+1)}$ according to (\ref{eq:updatebij}) ;
}\\
\STATE{Update $\mathbf{E}_k^{(m+1)}, \mathbf{f}_{ij}^{(m+1)}$ according to (\ref{eq:updateef})};\\
\STATE{$m=m+1$};\\
\STATE{\textbf{Until} convergence}
\RETURN { $\hat{\mathbf{L}}= \{\mathbf{L}_k^{(m)}\}_{k=1}^K$}.
\end{algorithmic}
\end{algorithm}

\subsubsection{Update of $\mathbf{B}$}
Updating $\mathbf{B}$ is equivalent to solving the following optimization problem:
\begin{equation}
\begin{aligned}
 &\underset{\{\mathbf{B}_k \in \mathcal{B}\}_{k=1}^K}{\text{minimize}} \sum_{k=1}^K-\Big[\tr({\mathbf{E}_k^{(m)}}^{\top}\mathbf{B}_k) +\frac{\rho}{2}  \| \mathbf{L}_k^{(m+1)} -\mathbf{B}_k\|_F^2\Big] \\
&+\sum_{i \neq j} \Big[{\mathbf{f}_{ij}^{(m)}}^{\top}(\mathbf{J}\mathbf{a}_{ij}^{(m+1)}-\mathbf{J}\mathbf{b}_{ij})+\frac{\rho}{2} \|\mathbf{J}\mathbf{a}_{ij}^{(m+1)}-\mathbf{J}\mathbf{b}_{ij}\|_2^2\Big],
\end{aligned}
\label{eq:ADMM-B}
\end{equation} 
where $\mathbf{L}_k^{(m+1)}=\mathbf{P}\mathbf{\Xi}_k^{(m+1)}\mathbf{P}^{\top}$.
Let $\mathbf{l}^{(m+1)}_{ij}=\{[\mathbf{L}_1^{(m+1)}]_{ij},\cdots,[\mathbf{L}_K^{(m+1)}]_{ij} \} \in \mathbb{R}^{K}$.
When $i=j$, then the solution to the problem (\ref{eq:ADMM-B} ) is 
\begin{equation} \label{eq:updatebii}
\begin{aligned}
\mathbf{b}_{ij}^{(m+1)}:&=
\underset{\{\mathbf{B}_k \in \mathcal{B}\}_{k=1}^K}{\arg \min} -{\mathbf{e}^{(m)}_{ij}}^{\top}\mathbf{b}_{ij}+\frac{\rho}{2}\|\mathbf{l}^{(m+1)}_{ij}-\mathbf{b}_{ij}\|_2^2,\\
&=\left[ \frac{1}{\rho}\mathbf{e}^{(m)}_{ij}+\mathbf{l}^{(m+1)}_{ij}\right]_+;
\end{aligned}
\end{equation}
When $i \neq j$, then the solution is
\begin{equation} 
\begin{aligned}
\mathbf{b}_{ij}^{(m+1)}:=\underset{\{\mathbf{B}_k  \in \mathcal{B}\}_{k=1}^K}{\arg \min} & -{\mathbf{e}^{(m)}_{ij}}^{\top}\mathbf{b}_{ij}+\frac{\rho}{2}\|\mathbf{l}^{(m+1)}_{ij}-\mathbf{b}_{ij}\|_2^2\\
&- {\mathbf{f}_{ij}^{(m)}}^{\top}\mathbf{J}\mathbf{b}_{ij}+\frac{\rho}{2}\|\mathbf{J}\mathbf{a}_{ij}^{(m+1)}-\mathbf{J}\mathbf{b}_{ij}\|_2^2. \\
\end{aligned} 
\end{equation}
By some linear algebra, we have
\begin{equation} \label{eq:updatebij}
\begin{aligned}
&\mathbf{b}_{ij}^{(m+1)}\\
=&\left[(\mathbf{I}+\widetilde{\mathbf{J}})^{-1}(\frac{1}{\rho}\mathbf{e}^{(m)}_{ij}+\frac{1}{\rho}\mathbf{J}^{\top}\mathbf{f}_{ij}^{(m)}+\mathbf{l}^{(m+1)}_{ij}+\widetilde{\mathbf{J}} \mathbf{a}_{ij}^{(m+1)})\right]_{-}. \\
\end{aligned} 
\end{equation}
\subsubsection{Update of $\mathbf{E}$ and $\mathbf{F}$}
\begin{equation} \label{eq:updateef}
\begin{aligned}
\mathbf{E}_k^{(m+1)}=\mathbf{E}_k^{(m)}+\rho(\mathbf{L}_k^{(m+1)}-\mathbf{B}_k^{(m+1)}),\\
\mathbf{f}_{ij}^{(m+1)}=\mathbf{f}_{ij}^{(m)}+\rho\mathbf{J}(\mathbf{a}_{ij}^{(m+1)}-\mathbf{b}_{ij}^{(m+1)}).
\end{aligned}
\end{equation}

\textbf{Global Convergence and Computation Complexity}. The ADMM-update procedure of the JEMGL is summarized in Algorithm \ref{algo: ADMMGL}. By separating the problem (\ref{eq:nfye}) into three blocks of
variables, $\mathbf{\Xi}$, $\mathbf{A}$ and $\mathbf{B}$, the JEMGL algorithm is guaranteed to converge to the global optimum. During the implementation the
JEMGL algorithm, we adopt an adaptive
update scheme for $\rho$ as suggested in Sec 3.4.1 of \cite{2010Distributed}, such that $\rho$ varies in every iteration and becomes less dependent on the initial choice. In the Algorithm \ref{algo: ADMMGL}, the most computationally demanding steps are the eigenvalue decomposition (line 2) and the matrix multiplication operations (line 3), whose computational complexities are both $\cO(p^3)$. Therefore, the computational complexity of Algorithm \ref{algo: ADMMGL} is in the order of $\cO(p^3)$ per iteration.

\textbf{Remark 2}: By giving different initialization on the Gram matrix $\mathbf{J}$, the JEMGL algorithm can learn multiple graphs with different topological patterns without any modification. Specifically, by setting $\mathbf{J}=\mathbf{0}$, the JEMGL algorithm learns $K$ graphs separately, and when $K=1$, it learns a single graph. Therefore, the single-graph learning algorithm in \cite{8747497} can be regarded as a special case of ours.

\section{Theoretical Guarantees} \label{sec: theorectical}
In this section, we derive the non-asymptotic estimation error bound of our structured fusion graph estimator. To derive this bound, we generally adopt tools from \cite{negahban2009unified} and \cite{2011penalized}, properly adjusted to our problem. We consider a high-dimensional setting $Kp^2/2 \gg n$, where both $n$ and $p$ go to infinity. 

 Let $\mathbf{L}^*=\{\mathbf{L}_k^* \in \mathcal{L}\}_{k=1}^{K}$ be the set of the true Laplacian matrices of $K$ graphs and $\hat{\mathbf{L}}_{\rho_n}=\{\hat{\mathbf{L}}_k \in \mathcal{L}\}_{k=1}^{K}$ be the optimal solution of the structured fusion graph estimator (\ref{eq:nfye}) with a fixed regularization parameter $\rho_n$. The following theorem establishes bounds and hence convergence rates for the error $\left\|\hat{\mathbf{L}}_{\rho_n}-\mathbf{L}^*\right\|$, in Frobenius norm. 

\begin{theorem} \label{theorem:mainresults}
Let  $s=\#\{(i,j): [\mathbf{L}_{k}^*]_{ij}\neq 0, k \in [K],i,j=1,\cdots,p,i \neq j\}$ denote the sparsity parameter. Suppose that $\tau \in\left(0, \min _{k} \frac{n_k}{n}\right)$. For
$$
n \geq \max \left\{\frac{2\ln p}{\tau}, \frac{2^{13} 15^2\lambda_{\mathbf{L}}^2\kappa_{\widetilde{\mathbf{J}}}^2 \nu^2}{\tau^3} s\ln p\right\}
$$
and 
$\rho_{n}=2 \left(1+ \sigma_{\min}(\widetilde{\mathbf{J}})\sqrt{K}\right)\left(\frac{1}{p}+ 40 \sqrt{2} \nu \sqrt{\frac{\ln p}{n \tau}}\right)$, 
 we have probability at least $(1-2 K / p)$ such that
$$
\left\|\hat{\mathbf{L}}_{\rho_n}-\mathbf{L}^*\right\|_{\mathrm{F}} \leq 24 \kappa_{\widetilde{\mathbf{J}}}\lambda_{\mathbf{L}}^2\tau^{-3/2}\left(\frac{\sqrt{s}}{p}+40\sqrt{2} \nu \sqrt{\frac{s \ln p}{n}}\right),
$$
where $\kappa_{\widetilde{\mathbf{J}}}=\left(1+ \sigma_{\min}(\widetilde{\mathbf{J}})\sqrt{K}\right)\left(1+\rho \sqrt{\sigma _{\max }(\widetilde{\mathbf{J}})}\right)$, $\lambda_{\mathbf{L}}= \max_k \|\mathbf{L}_k^*\|_{\mathrm{2}} $ and $\nu=\max_{k,i}[(\mathbf{L}_k^*)^{\dagger}]_{ii}$.
\end{theorem}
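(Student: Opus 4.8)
The plan is to recognize (\ref{eq:nfye}) as a regularized $M$-estimator and invoke the deterministic master inequality of Negahban et al. \cite{negahban2009unified}, so that the proof reduces to verifying its three hypotheses — decomposability of the penalty $\mathcal{P}=\mathcal{P}_1+\rho\mathcal{P}_2$, an upper bound on the dual norm of the loss gradient at $\mathbf{L}^*$ (which pins down $\rho_n$), and restricted strong convexity (RSC) of the log-det loss — and then reading off the rate $\|\Delta\|_{\mathrm{F}}\lesssim \rho_n\,\Psi(\bar{\mathcal{M}})/\kappa$, where $\Delta=\hat{\mathbf{L}}_{\rho_n}-\mathbf{L}^*$, $\Psi(\bar{\mathcal{M}})$ is the subspace compatibility constant and $\kappa$ is the RSC curvature.

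First I would linearize: using optimality of $\hat{\mathbf{L}}_{\rho_n}$ against the feasible point $\mathbf{L}^*$ together with the triangle and decomposability inequalities for $\mathcal{P}$, one shows $\Delta$ lies in the restricted cone on which the penalty off the true support is dominated by its value on the support. Both $\mathcal{P}_1$ (the off-diagonal $\ell_1$) and $\mathcal{P}_2=\sum_{i\neq j}\|\mathbf{J}\mathbf{L}_{ij}\|_2$ are norms that decompose with respect to the support subspace $\mathcal{M}$ of cardinality $s$, so this step is routine; the bookkeeping reduces to the compatibility constant, which I expect to scale as $\Psi(\bar{\mathcal{M}})\asymp\sqrt{s}\,(1+\rho\sqrt{\sigma_{\max}(\widetilde{\mathbf{J}})})$ — the second factor being exactly the $\mathcal{P}_2$ contribution visible in $\kappa_{\widetilde{\mathbf{J}}}$, obtained from $\|\mathbf{J}\theta_{ij}\|_2\le\sqrt{\sigma_{\max}(\widetilde{\mathbf{J}})}\,\|\theta_{ij}\|_2$.

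Next I would fix $\rho_n$ through $\rho_n\geq 2\,\mathcal{P}^*(\nabla)$, where $\mathcal{P}^*$ is the dual norm of $\mathcal{P}$ and $\nabla$ is the gradient of the negative log-likelihood at $\mathbf{L}^*$. Using $\log(|\mathbf{L}_k|_+)=\log\det(\mathbf{L}_k+\mathbf{M})$ and $(\mathbf{L}_k^*+\mathbf{M})^{-1}=(\mathbf{L}_k^*)^{\dagger}+\mathbf{M}$, the per-block gradient equals $\tfrac{n_k}{n}\big(\boldsymbol{\Sigma}_k-(\mathbf{L}_k^*)^{\dagger}-\mathbf{M}\big)$. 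This splits into a deterministic bias $-\mathbf{M}$, whose off-diagonal entries have magnitude $1/p$ (the origin of the additive $1/p$ in $\rho_n$), and a mean-zero fluctuation $\boldsymbol{\Sigma}_k-(\mathbf{L}_k^*)^{\dagger}$. I would then dominate $\mathcal{P}^*$ by a blockwise $\ell_\infty$ norm — this is where the factor $(1+\sigma_{\min}(\widetilde{\mathbf{J}})\sqrt{K})$ enters, via $\|\cdot\|_2\le\sqrt{K}\|\cdot\|_\infty$ on $K$-vectors — and control $\max_{k,i,j}\big|[\boldsymbol{\Sigma}_k-(\mathbf{L}_k^*)^{\dagger}]_{ij}\big|$. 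Since each $\boldsymbol{\Sigma}_k$ is a Gaussian sample covariance with population $(\mathbf{L}_k^*)^{\dagger}$, its centered entries are sub-exponential; a Bernstein tail and a union bound over the $Kp^2$ entries yield a deviation $\lesssim\nu\sqrt{\ln p/(n\tau)}$, with $\nu=\max_{k,i}[(\mathbf{L}_k^*)^{\dagger}]_{ii}$ the variance proxy and $\tau\le\min_k n_k/n$ absorbing $n_k\geq\tau n$. This produces exactly the stated $\rho_n$ on an event of probability $1-2K/p$.

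The hard part will be RSC of the log-det loss, which is only locally strongly convex with curvature that degrades as the spectral radius of its argument grows. I would expand the loss remainder in integral-Hessian form, $\int_0^1(1-t)\,\mathrm{vec}(\Delta)^{\top}\big[(\mathbf{L}^*+t\Delta+\mathbf{M})^{-1}\otimes(\mathbf{L}^*+t\Delta+\mathbf{M})^{-1}\big]\mathrm{vec}(\Delta)\,dt$, and lower-bound the smallest Hessian eigenvalue by $\asymp\lambda_{\mathbf{L}}^{-2}$ on a Frobenius ball using $\lambda_{\mathbf{L}}=\max_k\|\mathbf{L}_k^*\|_2$; together with $n_k\geq\tau n$ this gives $\kappa\asymp\tau\lambda_{\mathbf{L}}^{-2}$. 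The sample-size requirement $n\gtrsim\lambda_{\mathbf{L}}^2\kappa_{\widetilde{\mathbf{J}}}^2\nu^2 s\ln p/\tau^3$ is precisely what localizes $\Delta$ inside this ball so the curvature bound applies (a standard localization argument, as in \cite{2011penalized}). Feeding $\rho_n$, $\Psi(\bar{\mathcal{M}})$, and $\kappa\asymp\tau\lambda_{\mathbf{L}}^{-2}$ into $\|\Delta\|_{\mathrm{F}}\lesssim\rho_n\Psi(\bar{\mathcal{M}})/\kappa$, bounding $\tau^{-1}\le\tau^{-3/2}$ on the $1/p$ term to pull out a common $\tau^{-3/2}$, and tracking the numerical constants yields the claimed bound with prefactor $24$. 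I anticipate the two genuinely delicate points to be the dual-norm computation for the \emph{sum} $\mathcal{P}_1+\rho\mathcal{P}_2$ and keeping the pseudo-inverse and projection terms consistent throughout.
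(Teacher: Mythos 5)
Your proposal is correct and takes essentially the same route as the paper's own proof, which is precisely an instantiation of the decomposable-regularizer framework of \cite{negahban2009unified}: the paper's Lemmas~1--3 establish exactly the three ingredients you list, namely decomposability with compatibility constant $\sqrt{s}\bigl(1+\rho\sqrt{\sigma_{\max}(\widetilde{\mathbf{J}})}\bigr)$, the dual-norm gradient bound (the $1/p$ bias from $\mathbf{M}=\frac{1}{p}\mathbf{1}\mathbf{1}^{\top}$ plus the sub-exponential covariance tail of \cite{2011penalized}, union-bounded to probability $1-2K/p$) fixing $\rho_n \geq 2\gamma_n$, and local restricted curvature $\tau/\bigl(2(\lambda_{\mathbf{L}}+r)^2\bigr)$ of the log-determinant loss via its Kronecker-product Hessian. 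Your localization step and constant bookkeeping also match the paper's argument, including the choice $r_0=\lambda_{\mathbf{L}}$ implicit in the $(a+b)^2c\leq b$ device and the absorption $\tau^{-1}\leq\tau^{-3/2}$ on the $\sqrt{s}/p$ term.
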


The proof of Theorem $\ref{theorem:mainresults}$ can be found in Appendix \ref{sec:prooftheorem}.

Theorem $\ref{theorem:mainresults}$ characterizes the estimation error bound with respect to some factors, such as the number of data samples $n$, the signal dimension $p$, the number of
multiple related networks $K$, and some conditions on the true graph Laplacians. Besides, it also provides insights into the impact of the spectral properties of $\widetilde{\mathbf{J}}$ on the estimation accuracy. For instance, if $K$ graphs are highly similar, then $\sigma_{\min}(\widetilde{\mathbf{J}}) \approx 0$, leading to smaller sample complexity and estimation error bound. This makes sense, as information
can be better shared when estimating parameters of similar graphs. These theoretical results suggest that our proposed structured fusion penalty is able to take advantage of the network topology information to improve statistical estimation efficiency.

\section{Experiments} \label{sec:experiment}
In this section, we evaluate our proposed framework for joint inference of networks on both synthetic and real data. We first introduce the general experimental settings and then compare the performance of our methods with those of benchmark methods.

\subsection{Experiment setting}
\subsubsection{Baselines for comparison}
We compare the performance of our proposed methods with two state-of-the-art methods: time-varying graph learning with Tikhonov regularization (TGL-Tikhonov) \cite{7953415Kalofolias} and the graphical Lasso based model for combinatorial graph Laplacian estimation (CGL-GLasso) \cite{7979524}. For CGL-GLasso,  estimation
was carried out separately for each class with the same regularization parameter (see Table \ref{tab:Comparisionofmethod} for a summary of all the methods we compare). We apply our JEMGL algorithm with different choices of Gram matrix and demonstrate the importance of using appropriate penalties for different types of topological patterns among graphs.
\begin{table*}[h]
\renewcommand{\arraystretch}{1.2} 
\caption{List of alternative and proposed methods}\label{tab:Comparisionofmethod} 
\centering

\begin{tabular}{|c|c|c|}  
		\hline\hline  

		Method & Algorithm & Penalty\\  
		\hline
				CGL-GLasso & Single combinatorial graph Laplacian learning from smooth graph signals \cite{7979524}& Graphical Lasso \\
		
		\hline
		TGL-Tikhonov &  Time-varying graph learning from smooth graph signals \cite{7953415Kalofolias} & Tikhonov regularization \\
		\hline

		JEMGL-GGL (proposed) & Joint estimation of multiple graph Laplacians & Group graph Lasso (\ref{eq:golasso}) \\
		\hline
		JEMGL-TVGL (proposed) &Joint estimation of multiple graph Laplacians &Time-varing graph Lasso (\ref{eq:timelasso}) \\
		\hline
		JEMGL-LSP (proposed) & Joint estimation of multiple graph Laplacians & Laplacian shrinkage penalty (\ref{eq:laplasso}) \\
		\hline
	\end{tabular}
\setlength{\belowcaptionskip}{-0.5cm}
\end{table*}
\subsubsection{Evaluation metrics}
To measure the graph Laplacians estimation quality, we calculate the relative error (RE) and F-score (FS) used in \cite{7979524}, each average over all $K$ related graphs. RE is given by
\begin{equation}
\mathrm{RE}:=\frac{1}{K} \sum_{k=1}^{K} \frac{\left\|\widehat{\mathbf{L}}_k-\mathbf{L}_k^*\right\|_{\mathrm{F}}}{\|\mathbf{L}_k^*\|_{\mathrm{F}}} ,
\end{equation}
where $\widehat{\mathbf{L}}_k$ is the estimated graph Laplacian matrix of graph $G_k$, and $\mathbf{L}_k^*$ is the ground truth. RE reflects the accuracy of edge weights on the estimated graph. The FS is given by
\begin{equation}
\mathrm{FS}:=  \frac{1}{K} \sum_{k=1}^K \frac{2 \mathrm{tp}_k}{2\mathrm{tp}_k+\mathrm{fn}_k+\mathrm{fp}_k} ,
\end{equation}
where the true positive ($\mathrm{tp}_k$) is the total number of edges that are included both in $\widehat{\mathbf{L}}_k$
and $\mathbf{L}_k^*$, the false negative ($\mathrm{fn}_k$) is the number of edges that are not included in $\widehat{\mathbf{L}}_k$ but are included
in $\mathbf{L}_k^*$, and the false positive ($\mathrm{fp}_k$) is the number of edges that are included in $\widehat{\mathbf{L}}_k$
but are not included in $\mathbf{L}_k^*$. The FS measures the accuracy of the estimated graph topology. The higher the
FS is, the better the performance of capturing graph
topology is.
\subsection{Experiments on Synthetic Datasets} \label{sec:differentpatterns}
\subsubsection{Network construction} 
In the simulation, we consider observations from $K=3$ related graphs. In the following, we elaborate on the three different types of topological patterns among $K$ graphs.
\begin{itemize}
\item \textbf{Pattern 1.} The three graphs share a similar topology with only a handful of edges vary among the graphs.  We set the graph adjacency matrix $\mathbf{W}_k=\mathbf{W}_c +\mathbf{U}_k, k\in [K]$, where 
$\mathbf{W}_c$ is common in all graphs and $\mathbf{U}_k$ represents
unique structure of the $k$-th graph. The common part, 
$\mathbf{W}_c$, is generated as follows: we construct an undirected graph $G_c$ of size $p$ following an Erd{\H{o}}s-R{\'e}nyi model \cite{erdHos1960evolution} with an edge connection probability $0.2$ and edge weights uniformly from $[0.75,2]$ . The adjacency matrix of graph $G_c$ represents $\mathbf{W}_c$. For each $\mathbf{U}_k$, we first set $\mathbf{U}_k=\mathbf{0}$,  then we randomly pick $5\%$ pairs of symmetric off-diagonal entries and replace them with values randomly chosen from the interval $[-1,-0.5]\cup [0.5, 1]$. Finally, for each $\mathbf{W}_k$, we normalize the edge weighs to $[0,2]$ by setting $[\mathbf{W}_k]_{i j}=\left\{\begin{array}{cl} 0, & [\mathbf{W}_k]_{i j}<0; \\ 2, & [\mathbf{W}_k]_{i j}>2. \end{array}\right.$

\item \textbf{Pattern 2. } Three time-varying graphs. We first construct an Erd{\H{o}}s-R{\'e}nyi graph $G_1$ with $p$ nodes attached to other nodes with probability 0.3. The edge weights uniformly from $[0.75,2]$. The second graph $G_2$ is obtained by randomly down-sampling $10\%$ of edges from $G_1$. Similarly, we obtain $G_3$ by randomly down-sampling $10\%$ edges from $G_2$.

\item \textbf{Pattern 3. } Three graphs with two graphs are expected to be more similar to each other than the other one. We first generate a random modular graph $G_2$ with $p$ nodes and $3$ modules (subgraphs), where the node attachment probability across modules and within modules are $0.1$ and $0.5$ respectively. We randomly assign $\mathrm{Unif}(0.75,2)$ values to non-zero entries of the corresponding adjacency matrix $\mathbf{W}_2$. For each graph $G_1$ and $G_3$, we remove one of the modules of $G_2$ by setting the corresponding off-diagonal entries of $\mathbf{W}_2$ to $0$.
\end{itemize}

In the above setting, we use the pattern index to represent the level of heterogeneity among the three graphs. As the pattern index increases, we gradually increase the proportion of individual connectivity in each graph. (i.e., graphs with pattern 1 share the largest ratio of common structure, while graphs with pattern 3 share the smallest ratio of common structure.)
\subsubsection{Graph signals generation}
After three graphs are constructed based on a topological pattern, the data are randomly generated through a smooth graph signal model $\mathbf{x}_i^{(k)} \sim \mathcal{N} \Big(\boldsymbol{0}, \mathbf{L}_k^{\dagger}\Big),k \in [K]$. 
\subsubsection{Implementation}
For the JEMGL algorithm, we chose $\rho_n$ by conducting a grid search over tuning range $10^{-2+2r/15}$ with $r=0,1,\ldots,20$. The Gram matrix $\widetilde{\mathbf{J}}$ in time-varying graph lasso is initialized to $\omega_{k,k-1}=1, \forall 1<k \leq K,$ in all settings. The Laplacian shrinkage penalty (\ref{eq:laplasso}) is initialized as follows: For graphs with pattern 1, we set $\omega_{k,k^{\prime}}=1, \forall k, k^{\prime} \in [K]$; For graphs with pattern 2, we set $\omega_{k ,k^{\prime}}=\left\{\begin{array}{ll}
0.5, & |k - k^{\prime}|>1; \\
1, & |k - k^{\prime}|=1. \\
\end{array}\right.$; For graphs with pattern 3, we set $\omega_{k, k^{\prime}}=\left\{\begin{array}{ll}
0.1, & |k - k^{\prime}|>1; \\
1, & |k - k^{\prime}|=1. \\
\end{array}\right.$
For the CGL-GLasso \cite{7979524} and the TGL-Tikhonov \cite{7953415Kalofolias}, the tuning parameter $\alpha$ is selected from the following set: 
\begin{equation*}
\{0\} \cup\left\{0.75^{r}\left(z_{\max } \sqrt{\log p / n}\right) \mid r=1,2,3, \cdots, 14\right\},
\end{equation*}
where $z_{\max}=\max_{i\neq j}[\mathbf{L}_k^{\dagger}]_{ij}$.  All algorithms are terminated when the Frobenius norm of the change of $\mathbf{L}$ between iterations is smaller than a
threshold (by default $10^{-3}$).
\subsubsection{Performance comparison} 
In these experiments, our goal is to compare the best achievable performance of all methods. We set the number of graph nodes $p=15$ and consider two cases: balanced sample sizes $n=(100, 100,100)$, unbalanced sample sizes $n=(60,90,150)$. We performed 50 Monte-Carlo simulations for each set-up and reported the averaged relative error and F-score for each method. Table \ref{tab:ComparisionofPerformance} (a) are comparison summaries for graphs with pattern 1, Table \ref{tab:ComparisionofPerformance} (b) are comparison summaries for graphs with pattern 2 and Table \ref{tab:ComparisionofPerformance} (c) are comparison summaries for graphs with  pattern 3. 
\begin{table}[t]
\renewcommand{\arraystretch}{1.2} 
\setlength{\abovecaptionskip}{2 pt} 
\setlength{\belowcaptionskip}{0 cm}
\caption{The performance of learning multiple graphs with different patterns. (a) Comparison summaries for graphs with pattern 1. (b) Comparison summaries for graphs with pattern 2. (c) Comparison summaries for graphs with pattern 3. }\label{tab:ComparisionofPerformance} 
\centering
\subtable[ Pattern 1]{
\begin{tabular}{ |c|c|c|c|}
\hline\hline
		$n$ & Methods  & RE&FS \\  
		\hline
		\multirow{5}{*}{$(100,100,100)$} &CGL-GLasso  &0.492 &0.473\\ 
& TGL-Tikhonov  &0.319&0.670 \\
& JEMGL-GGL &\textbf{0.213}&\textbf{0.843}\\ 
& JEMGL-TVGL &0.314 &0.678\\
& JEMGL-LSP &0.222 &0.763\\
\hline
\multirow{5}{*}{$(60,90,150)$} &  CGL-GLasso &0.561&0.424\\
& TGL-Tikhonov  &0.431& 0.552 \\
& JEMGL-GGL &\textbf{0.333}&\textbf{0.652}\\
& JEMGL-TVGL&0.437&0.548\\
&  JEMGL-LSP &0.358 &0.641\\
\hline
\end{tabular}
}
\subtable[Pattern 2]{
\begin{tabular}{ |c|c|c|c|}
\hline\hline
		$n$ & Methods  & RE&FS \\  
		\hline
		\multirow{5}{*}{$(100,100,100)$} & CGL-GLasso  &0.412&0.561\\
& TGL-Tikhonov &0.237&0.748 \\
& JEMGL-GGL &0.374&0.625\\
& JEMGL-TVGL &\textbf{0.231}&\textbf{0.753}\\
&  JEMGL-LSP &0.286&0.701\\
\hline
\multirow{5}{*}{$(60,90,150)$} &  CGL-GLasso &0.502&0.451\\
& TGL-Tikhonov  &\textbf{0.362}&\textbf{0.639} \\
& JEMGL-GGL&0.478&0.512\\
& JEMGL-TVGL &0.366&0.631\\
&  JEMGL-LSP &0.387&0.601\\
\hline
\end{tabular}
}
\subtable[ Pattern 3]{
\begin{tabular}{ |c|c|c|c|}
\hline\hline
		$n$ & Methods  & RE&FS \\ 
		\hline
		\multirow{5}{*}{$(100,100,100)$} & CGL-GLasso  &0.401 &0.584\\
& TGL-Tikhonov  &0.391&0.592 \\
& JEMGL-GGL &0.395 &0.589\\
& JEMGL-TVGL &0.389&0.599\\
&  JEMGL-LSP&\textbf{0.301}&\textbf{0.695}\\
\hline
\multirow{5}{*}{$(60,90,150)$} &  CGL-GLasso  &0.492&0.473\\
& TGL-Tikhonov  &0.482&0.507 \\
& JEMGL-GGL &0.487 &0.476\\
& JEMGL-TVGL &0.484&0.493\\
&  JEMGL-LSP &\textbf{0.460}&\textbf{0.537}\\
\hline
\end{tabular}
}
\setlength{\belowcaptionskip}{0 cm}
\end{table}

Overall, as shown in Table \ref{tab:ComparisionofPerformance}, all methods are affected by unbalanced sample size and present worse performance in terms of estimation accuracy. Noted also that the separate estimation method (CGL-GLasso) gives the best results for graphs with pattern 3 and the worst results for graphs with pattern 1, which is exactly opposite to the results of the other four joint estimation methods. This is precisely as it should be, since
the joint estimation methods have the advantage of learning related networks by sharing information. As the pattern index increases, the topologies become more and more different, and the results of the joint and separate methods will move closer. This indicates that as long as a substantial common structure among graphs exists, the joint estimation methods are superior to the separate estimation method.

Additionally, considering the results in each sub-table, we can see that, regardless of penalty type, our JEMGL algorithms outperform the two baselines in most scenarios, except the TGL-Tikhonov methods slightly outperforming our methods on graphs with pattern 2. These empirical results demonstrate that the proposed structured fusion penalty is able to capture various types of topological patterns among graphs with high precision.

\textbf{Choice of Penalty Type}. While the JEMGL outperforms the two baselines regardless of the penalty type, even greater gains can be achieved by choosing a proper penalty. As shown in Table \ref{tab:ComparisionofPerformance}, there are clear benefits from using certain penalties in certain situations. For example, graphs with pattern 1, which is well-suited to be analysed by a group graph lasso, choosing this
penalty leads to an $8\%$ higher F-score. For graphs with pattern 3, the Laplacian shrinkage penalty does the best job at recovering the
three graphs. In real-world cases, the Gram matrix $\widetilde{\mathbf{J}}$ can be constructed based on exactly what type of topological patterns one is looking for in the data,  using the descriptions in Section \ref{sec:proposedestimator}.

\begin{figure}[!t]
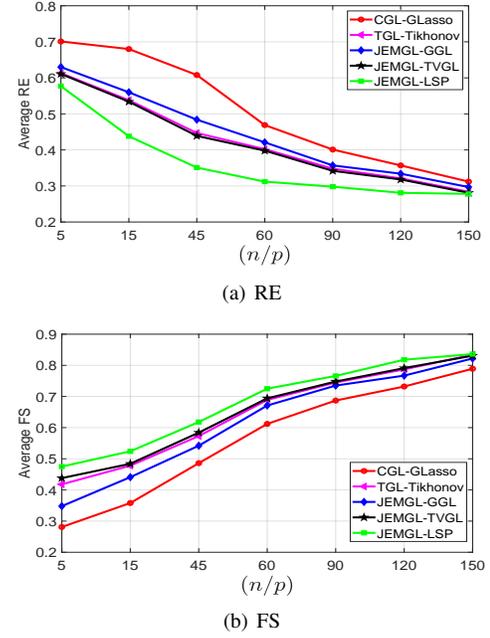
 
\setlength{\belowcaptionskip}{-3cm}
\centering
\subfigure[RE]{
\includegraphics[width=0.7\columnwidth , height=0.4 \columnwidth]{Figure/RE_np.pdf}
}
\hspace{0.2ex}
\subfigure[FS]{
\includegraphics[width=0.7\columnwidth, height=0.4 \columnwidth ]{Figure/FS_np.pdf}
}

\setlength{\belowcaptionskip}{-0.12 cm}
\caption{The effects of the sample size and the signal dimension $(n/p)$ on the performance of all the methods. }
 \label{fig:simulation1}
\end{figure}

\begin{figure}[!t]
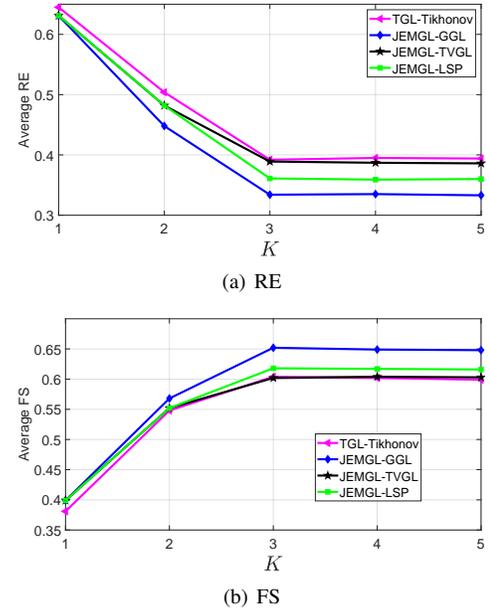
 
\setlength{\belowcaptionskip}{-3cm}
\centering
\subfigure[RE]{
\includegraphics[width=0.7\columnwidth , height=0.38 \columnwidth]{Figure/RE_K.pdf}
}
\hspace{0.2ex}
\subfigure[FS]{
\includegraphics[width=0.7\columnwidth, height=0.38 \columnwidth ]{Figure/FS_K.pdf}
}

\setlength{\belowcaptionskip}{-0.12 cm}
\caption{The effects of the number of graphs $(K)$ on the performance of the joint estimation methods. }
 \label{fig:simulation2}
\end{figure}

\begin{figure}[!t] 
\setlength{\belowcaptionskip}{-1cm}
\centering

\includegraphics[width=0.7\columnwidth , height=0.43 \columnwidth]{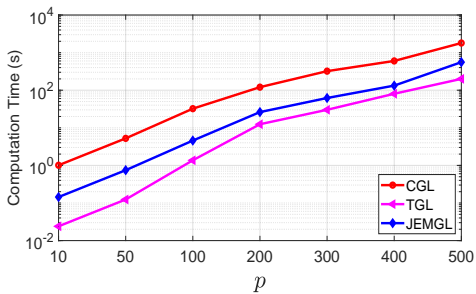}

\setlength{\belowcaptionskip}{-0.15 cm}
\caption{The comparison of computation time for different $p$.}
 \label{fig:simulation3}
\end{figure}

\subsubsection{Effect of sample size and signal dimension}
Now, we investigate the effect of the sample size $n$ and the signal dimension  $p$ (i.e., the number of graph nodes) on the performance of all the methods. We generate signals from $K=3$ graphs with pattern 3 and obtain several observations with a balanced sample size. We set the dimension $p=30$ and vary the total sample size $n$ from $150$ to $4500$ to examine results from the high-dimensional to the low-dimensional settings. The results are presented in Fig. \ref{fig:simulation1}. Overall, the performance of all methods increases with the growing $n/p$, which is consistent with the results in Theorem $\ref{theorem:mainresults}$. Specifically, the JEMGL-LSP outperforms other methods at all settings. Besides, all the joint estimation methods achieve much better performance than that of the separate estimation method (CGL-Lasso), especially in the small regime of $n/p$. These results demonstrate that the proposed penalty can substantially improve the graph estimation accuracy, especially in high-dimensional settings.

\subsubsection{Effect of the number of graphs} We also study the effect of the number of graphs $K$ on the performance of the joint estimation methods. In this experiment, we construct graphs with pattern 1 and vary the number of graphs from $1$ to $5$. We set the graph nodes $p=30$ and generate $n_k=300$ signal samples from each graph $G_k$. The results are presented in Fig. \ref{fig:simulation2}. In general, we observe that at first the performance of all methods increases significantly with the growing $K$, but then varies smoothly or even decreases slightly when $K\geq 4$. The intuitive explanations behind this phenomenon are given as follows. On one hand, separate estimation ($K=1$) is suboptimal due to the lack of additional information from related networks, whereas the joint estimation can achieve better performance by taking advantage of similarity among related works ($K>1$). On the other hand, as shown in Theorem $\ref{theorem:mainresults}$, with a fixed sample size, the estimation accuracy will decrease with the growing $K$. This is because, there are more unknown parameters with larger $K$, leading to the multiple graph estimation tasks more challenging. Therefore, a proper $K$ needs to be chosen to achieve a certain performance with fixed sample size.

\subsubsection{Computation time}
In Fig. \ref{fig:simulation3}, we compare the computation time of the proposed JEMGL algorithm with the two baseline methods. As the value of the Gram matrix $\tilde{\mathbf{J}}$ does not affect the computation time, hence we just set $\tilde{\mathbf{J}}=\mathbf{I}$ and omit the regularization indicator for brevity. All the methods are implemented in MATLAB R2019b and run on a 2.6-GHz Intel Core processor with 32-GB RAM. In the experiments, we generate signal samples from $K=3$ graphs with pattern 1 for different number of nodes $p=\{10,50,100,200,300,400,500\}$. As shown in Fig. \ref{fig:simulation3}, the proposed JEMGL algorithm provides faster convergence than the CGL method. This is due to the close-form solutions we derived in Section \ref{sec:optimization} for each of the ADMM subproblems. Although the computation complexity per iteration in both CGL and JEMGL are $\mathcal{O}(p^3)$, the CGL needs to solve nonnegative quadratic programs which require several number of iterations to converge. Note that the JEMGL requires longer computation time than the TGL, because the computation complexity of TGL is $\mathcal{O}(p^2)$. However, the JEMGL can be applied in many different applications, thus the choice of algorithms needs to make a trade-off between the computation complexity and the generality.

\subsection{Experiments on Real Datasets}
In this subsection, we apply the JEMGL algorithm on two real-world datasets to demonstrate that our joint graph estimator can be flexibly used to find meaningful network topologies from heterogeneous graph signals in different scientific fields. 
\subsubsection{Applications in Webkb Data}

\begin{figure*}[t!]
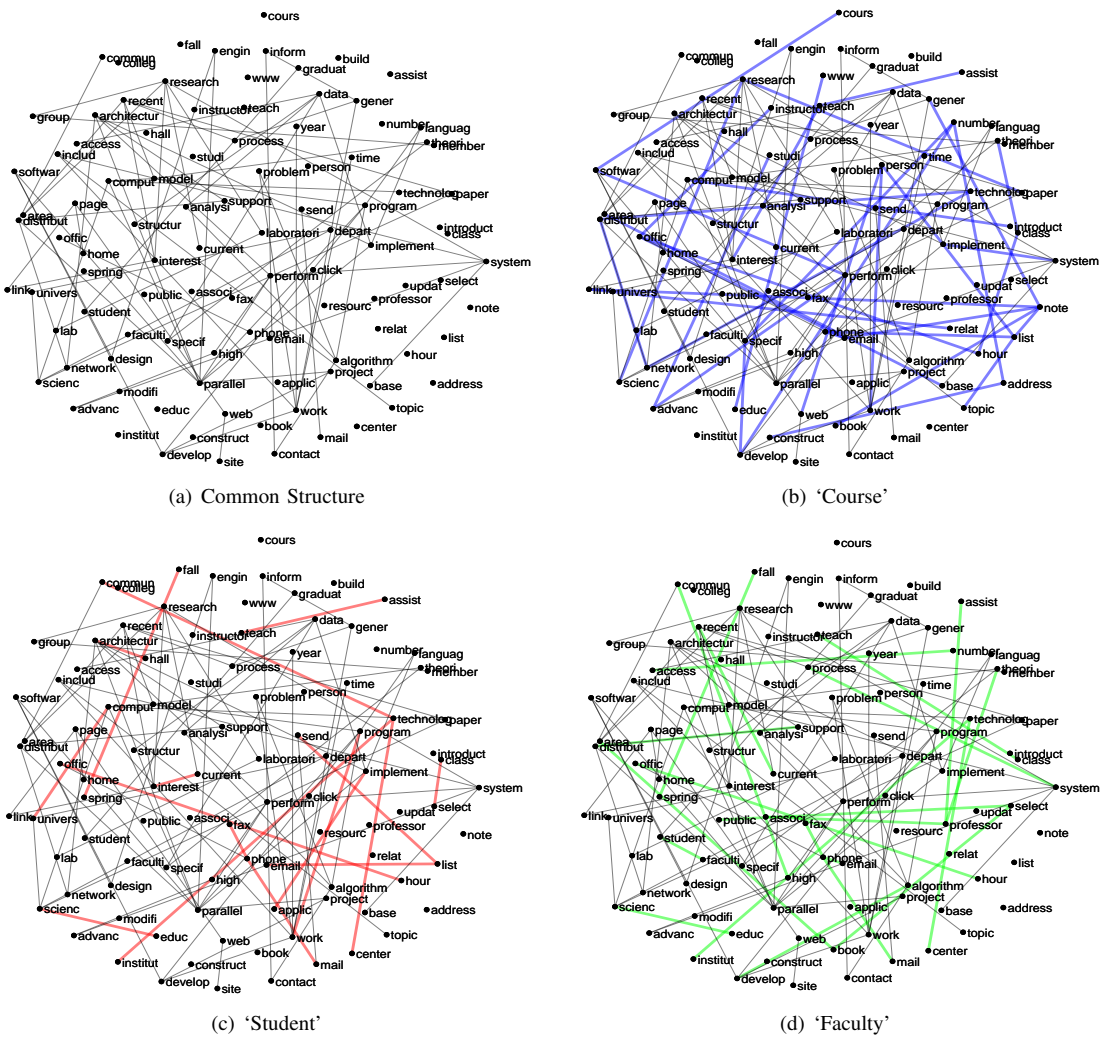
 
\setlength{\belowcaptionskip}{-0.5cm}
\centering
\subfigure[Common Structure]{
\includegraphics[width=0.8 \columnwidth, height=0.7  \columnwidth]{Figure/Comm1.pdf}
}
\hspace{0.2ex}
\subfigure[`Course' ]{
\includegraphics[width=0.8\columnwidth, height=0.7  \columnwidth]{Figure/Course1.pdf}
}
\hspace{0.2ex}
\subfigure[`Student']{
\includegraphics[width=0.8\columnwidth, height=0.7  \columnwidth]{Figure/Students1.pdf}
}
\hspace{0.2ex}
\subfigure[`Faculty' ]{
\includegraphics[width=0.8\columnwidth, height=0.7  \columnwidth]{Figure/Faculty1.pdf}
}

\setlength{\belowcaptionskip}{-0.2cm}
\caption{The visualization of the inferred word-word networks by the JEMGL-LSP on \textbf{Webkb} dataset. The nodes represent the selected 100 words. The edges denote the  conditional dependence relations among words. The black lines are the edges shared in
all three categories. The color lines are the unique edges in some categories.  (a) The common structure among all networks. (b) The inferred word-word network for `Course' webpages. (c) The inferred word-word network for `Student' webpages. (d) The inferred word-word network for `Faculty' webpages. }
\label{fig: realworld}
\end{figure*}

We analyse the \textbf{Webkb} data set\footnote{The full data set can be downloaded from the machine
learning repository at the University of California, Irvine, http://www.
ics.uci.edu/~mlearn/.} from the World Wide Knowledge Base project at Carnegie Mellon University \cite{Cravenweb}. The data set contains webpages from websites at computer science departments in various universities. The webpages include seven categories: student, faculty, course, project, staff, department, and the other. We aim to construct the word-word networks for each category. In the experiment, only 1228 webpages ($n=1228$) corresponding to the three largest categories are
selected: student (544 webpages), faculty (374 webpages) and course (310 webpages). The original data set was preprocessed by Cardoso-Cachopo \cite{cachopo2007improving}. The log-entropy weighting method \cite{dumais1991improving} was used to calculate the word-document matrix $\mathbf{X}\in \mathbb{R}^{p \times n}$ with $p$ and $n$ denoting the number of distinct words and webpages. In particular, let $f_{ij}, i \in [p], j \in [n]$ be the number of times the $i$-th word appears in the $j$-th webpage and let $h_{ij}=\frac{f_{ij}}{\sum_{j=1}^n f_{ij}}$. Then, the log-entropy weight of
the $i$-th word is defined as  $e_{i}=1+\sum_{j=1}^{n} h_{i, j} \log \left(h_{i, j}\right) / \log (n)$. Finally, the $(i,j)$-th entry of the word-document matrix is given by $[\mathbf{X}]_{ij}=e_i\log(1+f_{ij}), i \in [p], j\in [n]$, and it is normalized along each row. In our experiment, $p=100$ words with the highest log-entropy weights are selected out of a total of $4800$ words. Since we are trying to learn networks that belong to different classes, we apply the JEMGL-LSP method on the word-document matrix $\mathbf{X}$ to learn three word-word networks.

The resulting word-word networks for each category are shown in Fig. \ref{fig: realworld}.  In Fig. \ref{fig: realworld}, the black lines
are links shared in all categories, and the color lines are uniquely presented in some categories. The resulting common network structure is shown in Fig. \ref{fig: realworld}(a), the estimated networks for the course, student, and faculty categories are shown in Fig. \ref{fig: realworld}(b), (c), and (d), respectively. Clearly, most edges are black lines, which represents the homogeneity component shared in all categories. As an example, some standard phrases in computer science, such as comput-scienc, softwar-develop, program-language, and web-page, etc., are significant across all the tree categories and can be found in Fig. \ref{fig: realworld}(a). 

The JEMGL-LSP also allows us to explore the heterogeneity between different categories. For instance, some links uniquely appear in 'Course' graph, such as theori-class, class-hour, and soft-cours. As these are course-related terms, it is reasonable to expect these links to appear in the course category. Similarly, 
it can be seen that the words pair select-class is only linked in the student category, since graduate students have to choose classes. On the other hand, some word pairs only have links in the faculty category, such as assist-professor and associate-professor. In addition, teach-assist is shown frequently in the course and student category but less likely to appear in faculty class. 
Overall, this experimental results demonstrate that the JEMGL-LSP method can not only capture the basic common semantic structure of the webpages, but also identifies meaningful differences across the various categories.

\subsubsection{Applications in ECoG time series}
In epilepsy study, inferring the connectivity of brain regions from the electrocorticogram (ECoG) time series is an informative way to understand the mechanisms underlying epilepsy seizures \cite{kramer2008emergent}. As a second case study, we apply the proposed JEMGL-TVGL method to infer a time-varying graph from a publicly available ECoG dataset \footnote{The ECoG data is found at http://math.bu.edu/people/kolaczyk/datasets.html. Eight sets of measurements are provided in the file, in our experiment, measurements recorded during the seventh seizure are used.} to explore the time-varying brain connectivity over the period of a seizure. This ECoG data were recorded via 76 electrodes implanted in an epilepsy patient's brain (an 8x8 electrode grid located at the cortical level and two 6-electrode strips placed deeper in the brain). The combined electrode network recorded $p=76$ time series, which consisted of voltage levels measured in a brain region within proximity of each electrode. Samples of data before and after a seizure onset (i.e., the so-called pre-ictal and ictal periods) were then extracted from the ECoG time series. The sampling rate was 400 Hz and each period lasted for 10 seconds. In order to ensure that the signal is approximately stationary, each of the 10s intervals was divided into 10 successive segments, each spanning 1s. Therefore, we have $K=20$ segments in total and the sample size is $n_k=400, \forall k=1,\ldots,20$. 
\begin{figure}[t!] 
\setlength{\belowcaptionskip}{-0.5cm}
\centering
\includegraphics[width=0.9\columnwidth]{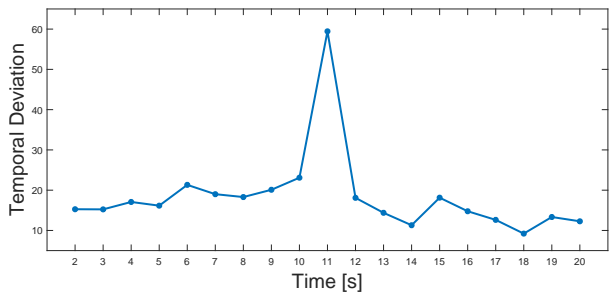}
\setlength{\belowcaptionskip}{-0.2cm}
\caption{Plot of the temporal deviation of the inferred brain connectivity networks from ECoG time series, which detects a structure shift at seizure onset.
  }
\label{fig: EoCG-dev}
\end{figure}
The data are arranged so that the first $10$ segments belong to the pre-ictal period and the remaining $10$ segments come from the ictal period. As examined in \cite{WANG20141744}, we would expect the brain connectivity network to vary smoothly within the pre-ictal period and the ictal period, but there would be a sudden shift in the network structure caused by the epileptic seizure. Therefore, we set all the weighted parameters in the time-varying graph lasso penalty to $1$ except the $w_{11,10}=10$. 
\begin{figure*}[t!]
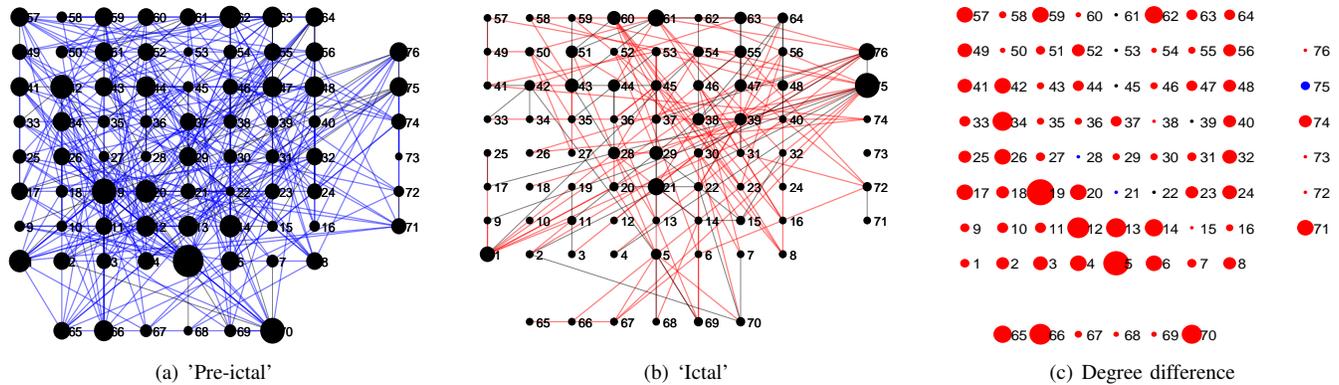
 
\setlength{\belowcaptionskip}{-0.5cm}
\centering
\subfigure['Pre-ictal']{
\includegraphics[width=0.65 \columnwidth,  height=0.52 \columnwidth]{Figure/pre_graph.pdf}
}
\hspace{0.2ex}
\subfigure[`Ictal']{
\includegraphics[width=0.65\columnwidth, height=0.52 \columnwidth]{Figure/ict_graph.pdf}
}
\hspace{0.2ex}
\subfigure[Degree difference]{
\includegraphics[width=0.6\columnwidth, height=0.52 \columnwidth ]{Figure/degree_difference.pdf}
}
\setlength{\belowcaptionskip}{-0.2cm}
\caption{The visualization of brain connectivity networks inferred by the JEMGL-TVGL on the \textbf{ECoG} time series. The nodes represent the 76 electrodes implanted in the brain of an epilepsy patient. The edges represent the connectivity between brain regions about each electrode, where the black lines are shared in both two periods and color links are unique in each period. The node degrees of inferred networks are encoded by circle radii. (a) Network inferred at pre-ictal interval; (b) Network inferred at ictal onset; (c) The node degree changes at seizure onset, where the circle radius indicates the magnitude of the degree difference and the color denotes the sign: a red (blue) circle denotes a decrease (increase) in degree.}
\label{fig: ECoGvisu}
\end{figure*}

After running our JEMGL-TVGL method, we plot the temporal deviation $\|\mathbf{L}_k-\mathbf{L}_{k-1}\|_{\text{F}}$, in Fig. \ref{fig: EoCG-dev}. We note that there is a spike in the temporal deviation score at seizure onset. This indicates a  significant shift in network structure at this specific time. We show the two inferred networks during this shift in Fig. \ref{fig: ECoGvisu}(a) and Fig. \ref{fig: ECoGvisu}(b). We observe a global decrease in the level of brain connectivity after the onset of the epileptic seizure, i.e., there are fewer edges in Fig. \ref{fig: ECoGvisu}(b) than in Fig. \ref{fig: ECoGvisu}(a). To quantify these changes, we employ the node degree, which is indicative of how well-connected a given node is, as a measure for network analysis. The value of the node degree is encoded by the radii of circles associated with the nodes in Fig. \ref{fig: ECoGvisu}(a) and Fig. \ref{fig: ECoGvisu}(b). We show how the degree of each node changes during a seizure in Fig. \ref{fig: ECoGvisu}(c). We observe that the degree of most nodes decrease after the onset of the seizure. These observations provide empirical evidence that the brain network becomes less connected, meaning that the diffusion of information after a seizure is inhibited. Such observations are consistent with the findings in \cite{kramer2008emergent}. We also find that a depth electrode exhibits an increase in node degree after seizure onset, the brain region around this electrode may be a potential target for further study on epilepsy.

In summary, the above two real-world case studies demonstrate the flexibility of the proposed JEMGL algorithm in learning different types of networks in different applications.

\section{Conclusion} \label{sec:conclustion}
In this paper, we have presented a general graph estimator for jointly estimating multiple graph Laplacians from heterogeneous graph signals, called JEMGL. The JEMGL is capable of capturing sparse, heterogeneity and homogeneity components among graphs via structured fusion regularization. The proposed regularization term generalizes useful convex penalties for joint estimation of multiple graphs, allowing us to encode different topological patterns among graphs, and can be particularly useful in learning from heterogeneous graph signals. In the JEMGL, we have developed an ADMM-based algorithm to solve the penalized graph estimator efficiently. Moreover, we have provided theoretical guarantees for the JEMGL, which establishes a non-asymptotic estimation error bound under the high-dimensional setting and also enables us to investigate the relationship between the estimation accuracy with the topological patterns, the number of data samples, and the number of networks. Experimental results on synthetic and real data have demonstrated the superior performance of the JEMGL.

Throughout this paper, we assume that the signal labels are known. It is of great interest to study extensions of the JEMGL to learn graph Laplacian  mixture model, which are left for our future research.

\balance

\bibliographystyle{IEEEtran}
\bibliography{bib/StringDefinitions,bib/IEEEabrv,bib/bscoop}

\newpage
\begin{appendices}
\section{Proof of Theorem \ref{theorem:mainresults}} \label{sec:prooftheorem}
We now turn to the proof of theorem \ref{theorem:mainresults}.
To treat multiple graph Laplacians in a unified way, the parameter space $\Omega$ is defined to the set of $\mathbb{R}^{(p K) \times (p K) }$ symmetric block diagonal matrices, where each diagonal block corresponding a graph Laplacian matrix. In this parameter space, we define a map: $f: \mathbb{R}^{(pK) \times (pK) } \rightarrow \mathbb{R}$, given by
\begin{equation*}
f(\mathbf{\Delta})=-\mathcal{F}_n(\mathbf{L}^*+\mathbf{\Delta})+\mathcal{F}_n(\mathbf{L}^*)+\rho_n\big(\mathcal{P}(\mathbf{L}^*+\mathbf{\Delta})-\mathcal{P}(\mathbf{L}^*)\big).
\end{equation*}
This map provides information
on the behavior of our objective function in the neighborhood of $\mathbf{L}^*$. Based on the optimality of the solution, we have $f(\hat{\mathbf{\Delta}}_n) \leq 0$, where $\hat{\mathbf{\Delta}}_n=\hat{\mathbf{L}}_{\rho_n}-\mathbf{L}^*$. Our goal is to obtain an upper bound of $\|\hat{\mathbf{\Delta}}_n\|_{\mathrm{F}}$, which depends on the properties of the empirical loss function $\mathcal{F}_n(\cdot)$ and penalty function $\mathcal{P}(\cdot)$. In the following, we first present some lemmas, and then establish our main results based on these lemmas.

Denote the support space of $\mathbf{L}_{k}^*$ as $\mathcal{S}_k=\{(i,j): [\mathbf{L}_{k}^*]_{ij}\neq 0, i,j=1,\cdots,p,i \neq j\}$, then the support space of $\mathbf{L}^{*}$ is $\mathcal{S}=\bigcup_{k=1}^{K} \mathcal{S}_{k}$.
The orthogonal complement of support space $\mathcal{S},$ namely, is defined as the set
\begin{equation}
\mathcal{S}^{\perp}:=\left\{\mathbf{L}^{\prime} \in \Omega \mid \left\langle \mathbf{L}, \mathbf{L}^{\prime}\right\rangle=0, \forall  \mathbf{L} \in \mathcal{S}\right\}.
\end{equation}
Given a matrix set $\mathbf{L} \in \Omega$, we use $\mathbf{L}_{\mathcal{S}}$ to denote the projection of $\mathbf{L}$ onto $\mathcal{S}$. 

\begin{lemma} \label{lemma:lemma1} Properties of $\mathcal{P}(\cdot)$:

(i) Our penalty defined in (\ref{eq:pengl}) is a seminorm, convex and decomposable with respect to ($\mathcal{S}, \mathcal{S}^\perp$), i.e., 
\begin{equation}
\mathcal{P}(\mathbf{L}_1+\mathbf{L}_2)=\mathcal{P}(\mathbf{L}_1)+\mathcal{P}(\mathbf{L}_2),\forall  \mathbf{L}_1 \in \mathcal{S},\mathbf{L}_2 \in \mathcal{S}^\perp.
\end{equation} 
Besides,
\begin{equation} \label{eq:ieqpenalty}
\begin{aligned}
\mathcal{P}(\mathbf{L}^*+\mathbf{\Delta})-\mathcal{P}(\mathbf{L}^*)
\geq \mathcal{P}\left(\mathbf{\Delta}_{\mathcal{S}^{\perp}}\right)-\mathcal{P}\left(\mathbf{\Delta}_{\mathcal{S}}\right).
\end{aligned}
\end{equation}

(ii) The dual norm of $\mathcal{P}(\mathbf{L})$ represented by $\mathcal{P}^{*}(\mathbf{L})$ can be bounded by
\begin{equation} \label{eq:penalitybound}
\mathcal{P}^{*}(\mathbf{L})\leq\left(1+\sigma_{\min}(\widetilde{\mathbf{J}})\sqrt{K}\right)\max_{k} \|\mathbf{L}_k\|_{\max,\mathrm{off}}.
\end{equation}

(iii) For  $\mathbf{L} \in \mathcal{S}$, $\mathcal{P}(\mathbf{L})$ satisfies the following inequality: 
\begin{equation} \label{eq:39}
\mathcal{P}(\mathbf{L}) \leq \sqrt{s}\Big(1+\rho \sqrt{\sigma _{\max }(\widetilde{\mathbf{J}})}\Big) \|\mathbf{L}\|_{\mathrm{F}},
\end{equation}
where $s$ representing the sparsity parameter, i.e., $s:=\mathrm{card}(\mathcal{S})$.
\end{lemma}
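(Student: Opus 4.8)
All three claims are deterministic properties of the penalty, and the common engine is the \emph{separability} of $\mathcal{P}$ across the index-pair groups. Writing $g(\mathbf{v})=\|\mathbf{v}\|_1+\rho\|\mathbf{J}\mathbf{v}\|_2$ for $\mathbf{v}\in\mathbb{R}^K$, where $\|\mathbf{J}\mathbf{v}\|_2=\sqrt{\mathbf{v}^\top\widetilde{\mathbf{J}}\mathbf{v}}$ since $\widetilde{\mathbf{J}}=\mathbf{J}^\top\mathbf{J}\succeq 0$, we have $\mathcal{P}(\mathbf{L})=\sum_{i\neq j}g(\mathbf{L}_{ij})$ over disjoint coordinate blocks $\mathbf{L}_{ij}$. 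I would first record that $\mathcal{P}$ is a seminorm and hence convex: each $g$ is a sum of two seminorms on $\mathbb{R}^K$, so it is absolutely homogeneous and obeys the triangle inequality, and a finite sum of seminorms over disjoint blocks inherits both properties.

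\textbf{Part (i).} Since $\mathcal{S}=\bigcup_k\mathcal{S}_k$ is a union of whole groups, the support subspace and its orthogonal complement $\mathcal{S}^\perp$ partition the groups: a matrix in $\mathcal{S}$ is supported on index pairs in $\mathcal{S}$, a matrix in $\mathcal{S}^\perp$ on the complementary pairs. Thus for $\mathbf{L}_1\in\mathcal{S}$, $\mathbf{L}_2\in\mathcal{S}^\perp$ and any group $(i,j)$, at most one of $[\mathbf{L}_1]_{ij},[\mathbf{L}_2]_{ij}$ is nonzero, so $g([\mathbf{L}_1+\mathbf{L}_2]_{ij})$ equals $g([\mathbf{L}_1]_{ij})$ or $g([\mathbf{L}_2]_{ij})$; summing over groups gives the decomposability identity. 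Inequality (\ref{eq:ieqpenalty}) then follows by writing $\mathbf{L}^*+\mathbf{\Delta}=(\mathbf{L}^*+\mathbf{\Delta}_{\mathcal{S}})+\mathbf{\Delta}_{\mathcal{S}^\perp}$, applying decomposability to split off $\mathbf{\Delta}_{\mathcal{S}^\perp}$, and using $\mathcal{P}(\mathbf{L}^*+\mathbf{\Delta}_{\mathcal{S}})\ge\mathcal{P}(\mathbf{L}^*)-\mathcal{P}(\mathbf{\Delta}_{\mathcal{S}})$ together with $\mathbf{L}^*=\mathbf{L}^*_{\mathcal{S}}$.

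\textbf{Part (ii), the main obstacle.} Because the blocks are disjoint, the dual seminorm is the blockwise maximum of the per-group dual over the off-diagonal blocks, $\mathcal{P}^*(\mathbf{L})=\max_{i\neq j}g^*(\mathbf{L}_{ij})$, reducing the task to the dual of $g$ on $\mathbb{R}^K$. Since $g\ge\|\cdot\|_1$, its dual is at most $\|\cdot\|_\infty$, which already yields the sharp estimate $\mathcal{P}^*(\mathbf{L})\le\max_k\|\mathbf{L}_k\|_{\max,\mathrm{off}}$; as $1+\sigma_{\min}(\widetilde{\mathbf{J}})\sqrt{K}\ge 1$, this \emph{immediately} implies (\ref{eq:penalitybound}). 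I expect the authors instead retain both components, pairing $\langle\mathbf{L}_{ij},\mathbf{v}\rangle$ against the $\ell_1$ term by H\"older and against the weighted $\ell_2$ term by a $\widetilde{\mathbf{J}}$-weighted Cauchy--Schwarz so as to expose the explicit $\sigma_{\min}(\widetilde{\mathbf{J}})\sqrt{K}$ correction. The delicate point is precisely this constant: a direct dual computation of the weighted $\ell_2$ term naturally carries $\sigma_{\min}(\mathbf{J})^{-1}$, so reproducing a \emph{numerator} dependence on $\sigma_{\min}(\widetilde{\mathbf{J}})$ is the one step I would have to reconcile with their intended chain of inequalities.

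\textbf{Part (iii).} For $\mathbf{L}\in\mathcal{S}$ I would bound the two pieces separately. The $\ell_1$ part is $\mathcal{P}_1(\mathbf{L})=\sum_{(i,j)\in\mathcal{S}}\|\mathbf{L}_{ij}\|_1\le\sqrt{s}\,\|\mathbf{L}\|_{\mathrm{F}}$ by Cauchy--Schwarz over the active off-diagonal coordinates, and the weighted part is $\rho\mathcal{P}_2(\mathbf{L})=\rho\sum_{(i,j)\in\mathcal{S}}\sqrt{\mathbf{L}_{ij}^\top\widetilde{\mathbf{J}}\mathbf{L}_{ij}}\le\rho\,\sigma_{\max}(\widetilde{\mathbf{J}})^{1/2}\sum_{(i,j)\in\mathcal{S}}\|\mathbf{L}_{ij}\|_2\le\rho\,\sigma_{\max}(\widetilde{\mathbf{J}})^{1/2}\sqrt{s}\,\|\mathbf{L}\|_{\mathrm{F}}$ by the spectral comparison $\sqrt{\mathbf{v}^\top\widetilde{\mathbf{J}}\mathbf{v}}\le\sigma_{\max}(\widetilde{\mathbf{J}})^{1/2}\|\mathbf{v}\|_2$ followed by Cauchy--Schwarz over the active groups; summing gives (\ref{eq:39}). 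Here too the only subtlety is bookkeeping: the clean $\sqrt{s}$ factor on the $\ell_1$ term requires reading $s$ as the number of nonzero off-diagonal \emph{entries}, whereas the group Cauchy--Schwarz on $\mathcal{P}_2$ counts active \emph{index pairs}; I would fix this convention once at the outset and apply the norm equivalences $\|\mathbf{v}\|_1\le\sqrt{K}\|\mathbf{v}\|_2$ accordingly, which is the real (if routine) work behind both (ii) and (iii).
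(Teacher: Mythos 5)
Your proposal is correct, and parts (i) and (iii) are in substance the paper's own proof. For (i), the paper applies the triangle inequality first, $\mathcal{P}(\mathbf{L}^*+\mathbf{\Delta})\geq\mathcal{P}(\mathbf{L}^*+\mathbf{\Delta}_{\mathcal{S}^{\perp}})-\mathcal{P}(\mathbf{\Delta}_{\mathcal{S}})$, and then decomposes $\mathcal{P}(\mathbf{L}^*+\mathbf{\Delta}_{\mathcal{S}^{\perp}})=\mathcal{P}(\mathbf{L}^*)+\mathcal{P}(\mathbf{\Delta}_{\mathcal{S}^{\perp}})$; you split off $\mathbf{\Delta}_{\mathcal{S}^\perp}$ first and apply the triangle inequality second — the same two ingredients in the opposite order. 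For (iii), the paper performs exactly your two estimates (Cauchy--Schwarz on the $\ell_1$ part, the spectral bound $\sqrt{\mathbf{v}^\top\widetilde{\mathbf{J}}\mathbf{v}}\le\sigma_{\max}(\widetilde{\mathbf{J}})^{1/2}\|\mathbf{v}\|_2$ followed by group Cauchy--Schwarz on $\mathcal{P}_2$), written as a supremum of ratios; and the $s$-convention wrinkle you flagged is genuinely present there — the paper's $\ell_1$ step implicitly reads $s$ as the number of nonzero off-diagonal \emph{entries} while its group step counts active \emph{pairs}, and your fix (fix the entry-count convention, under which active pairs $\le$ active entries, so both steps hold) is the right repair.

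Part (ii) is where you genuinely diverge, and your suspicion about the constant is vindicated. The paper quotes a dual-norm characterization $\mathcal{P}^{*}(\mathbf{L})=\max\bigl(\max_{k}\|\mathbf{L}_k\|_{\max,\mathrm{off}},\,\max_{i\neq j}(\mathbf{L}_{ij}^{\top}\widetilde{\mathbf{J}}^{-1}\mathbf{L}_{ij})^{1/2}\bigr)$ from its reference and then bounds $(\mathbf{L}_{ij}^{\top}\widetilde{\mathbf{J}}^{-1}\mathbf{L}_{ij})^{1/2}\le\sigma_{\min}(\widetilde{\mathbf{J}})\|\mathbf{L}_{ij}\|_2$ ``by norm equivalence.'' As you anticipated, the spectral factor there is inverted: the correct constant is $\sigma_{\max}(\widetilde{\mathbf{J}}^{-1})^{1/2}=\sigma_{\min}(\widetilde{\mathbf{J}})^{-1/2}$, so the paper's chain does not hold as written (and taking the \emph{max} of the two componentwise duals is itself an overstatement, since the dual of a sum of seminorms is an infimal convolution bounded above by the \emph{min} of the individual duals). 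Your one-line domination argument — $\mathcal{P}\ge\mathcal{P}_1$ on the off-diagonal subspace, hence $\mathcal{P}^*\le\mathcal{P}_1^*=\max_k\|\mathbf{L}_k\|_{\max,\mathrm{off}}$ — is sounder and proves the stated bound a fortiori because $1+\sigma_{\min}(\widetilde{\mathbf{J}})\sqrt{K}\ge 1$; it even shows the prefactor is unnecessary, which would only sharpen $\gamma_n$ in Lemma~2(ii) and the resulting $\rho_n$ in Theorem~1. What the paper's route was intended to buy is the explicit $\widetilde{\mathbf{J}}$-dependence of $\kappa_{\widetilde{\mathbf{J}}}$ that the discussion after Theorem~1 interprets (similar graphs, $\sigma_{\min}(\widetilde{\mathbf{J}})\approx 0$, smaller error); given the inverted inequality, that interpretation — but not the validity of the lemma's upper bound, which your argument secures — should be treated with caution. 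One shared caveat applies to both routes: since $\mathcal{P}$ is only a seminorm (it vanishes on diagonal supports), its ``dual norm'' must be understood restricted to the off-diagonal support, as in the decomposable-regularizer framework the paper adopts.
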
 
\begin{proof}
See Appendix \ref{sec:proofoflemm1} for details.
\end{proof}

\begin{lemma} \label{lemma:lemma2}
Properties of $\mathcal{F}_n(\cdot)$:

(i) The gradient of $\mathcal{F}_n(\mathbf{L}^*)$ is a block matrix with the $k$-th block is given by
\begin{equation}
[\nabla \mathcal{F}_n(\mathbf{L}^*)]_k=\frac{n_k}{n}\left(\mathbf{\Sigma}_k^*+\mathbf{M}- \hat{\boldsymbol{\Sigma}}_k \right),
\end{equation}
where $\mathbf{\Sigma}_k^*=(\mathbf{L}_k^*)^{\dagger}$ and $\mathbf{M}=\frac{1}{p}\mathbf{1}\mathbf{1}^{\top}$.

(ii) Let $\frac{n_k}{n}>\tau>0$ for all $k$ and $n \geq \frac{2}{\tau} \ln p$, the $\mathcal{P}^*(\cdot)$ norm of the gradient is bounded by 
\begin{equation}
\mathcal{P}^* \big (\nabla \mathcal{F}_n\left(\mathbf{L}^*\right)\big ) \leq \gamma_n,
\end{equation}
with probability a least $1-\frac{2K}{p}$, and $\gamma_n= \left(1+ \sigma_{\min}(\widetilde{\mathbf{J}})\sqrt{K}\right)\left(\frac{1}{p}+ 40 \sqrt{2} \max_{k,i}[\mathbf{\Sigma}_k^*]_{ii}\sqrt{\frac{\ln p}{n \tau}}\right)$.

(iii) (Restricted curvature conditions) Let $c$ be a universe constant, and for $\|\mathbf{\Delta}\|_{\mathrm{F}} \leq r$, $\min_k (\frac{n_k}{n}) \geq \tau>0$ and $\lambda_{\mathbf{L}} \equiv \max_k \|\mathbf{L}_k^*\|_{\mathrm{2}} $, 
\begin{equation} \label{eq:RSCofLoss}
\begin{aligned}
& -\mathcal{F}_{n}\left(\mathbf{L}^*+\mathbf{\Delta}\right)+\mathcal{F}_{n}\left(\mathbf{L}^{*} \right)+\langle \nabla \mathcal{F}_n(\mathbf{L}^*),\mathbf{\Delta} \rangle\\
\geq &\frac{ \tau}{2(\lambda_{\mathbf{L}}+r)^2} \|\mathbf{\Delta}\|_{\mathrm{F}}^{2} .
\end{aligned}
\end{equation}

\end{lemma}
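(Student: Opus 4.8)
\textbf{Proof plan for Lemma \ref{lemma:lemma2}.} The three parts concern distinct features of the smooth loss $\mathcal{F}_n(\mathbf{L})=\frac{1}{n}\sum_k n_k\big[\log(|\mathbf{L}_k|_{+})-\tr(\hat{\boldsymbol{\Sigma}}_k\mathbf{L}_k)\big]$, namely its gradient (i), the size of that gradient in the dual norm $\mathcal{P}^*(\cdot)$ (ii), and a quadratic lower bound on its first-order Taylor remainder (iii); I would treat them in turn. For part (i) I would start from the identity $\log(|\mathbf{L}_k|_{+})=\log\det(\mathbf{L}_k+\mathbf{M})$ already recorded in the excerpt, with $\mathbf{M}=\frac{1}{p}\mathbf{1}\mathbf{1}^{\top}$. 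Since $\mathcal{F}_n$ is separable across the diagonal blocks, it suffices to differentiate each summand: the gradient of $\log\det(\mathbf{L}_k+\mathbf{M})$ is $(\mathbf{L}_k+\mathbf{M})^{-1}$ and that of the linear term $-\tr(\hat{\boldsymbol{\Sigma}}_k\mathbf{L}_k)$ is $-\hat{\boldsymbol{\Sigma}}_k$. Evaluating at $\mathbf{L}_k^*$ I would invoke the Laplacian identity $(\mathbf{L}_k^*+\mathbf{M})^{-1}=(\mathbf{L}_k^*)^{\dagger}+\mathbf{M}$, valid because $\mathbf{L}_k^*$ and $\mathbf{M}$ act on the complementary invariant subspaces $\mathrm{span}(\mathbf{1})^{\perp}$ and $\mathrm{span}(\mathbf{1})$ (with $\mathbf{M}$ idempotent there). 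Multiplying by $\frac{n_k}{n}$ yields exactly $[\nabla\mathcal{F}_n(\mathbf{L}^*)]_k=\frac{n_k}{n}(\mathbf{\Sigma}_k^*+\mathbf{M}-\hat{\boldsymbol{\Sigma}}_k)$ with $\mathbf{\Sigma}_k^*=(\mathbf{L}_k^*)^{\dagger}$.

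For part (ii) I would first apply Lemma \ref{lemma:lemma1}(ii) to get $\mathcal{P}^*(\nabla\mathcal{F}_n(\mathbf{L}^*))\le\big(1+\sigma_{\min}(\widetilde{\mathbf{J}})\sqrt{K}\big)\max_k\|[\nabla\mathcal{F}_n(\mathbf{L}^*)]_k\|_{\max,\mathrm{off}}$, which reduces the task to controlling the largest off-diagonal entry of each block. Using $\frac{n_k}{n}\le1$ and the triangle inequality, each block splits into the deterministic part $\mathbf{M}$, whose off-diagonal entries equal $1/p$, and the stochastic part $\hat{\boldsymbol{\Sigma}}_k-\mathbf{\Sigma}_k^*$, the deviation of a Gaussian sample covariance (the samples are $\mathcal{N}(\boldsymbol{\mu}_k,(\mathbf{L}_k^*)^{\dagger})$) from its mean $\mathbf{\Sigma}_k^*$. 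For the latter I would invoke a standard entrywise tail bound of the form $\prob\big(|[\hat{\boldsymbol{\Sigma}}_k]_{ij}-[\mathbf{\Sigma}_k^*]_{ij}|>t\big)\le c_1\exp\big(-c_2 n_k t^2/(\max_i[\mathbf{\Sigma}_k^*]_{ii})^2\big)$, valid in the small-$t$ regime, then union-bound over the at most $p^2$ off-diagonal entries and over the $K$ graphs. Choosing $t=40\sqrt{2}\,\max_{k,i}[\mathbf{\Sigma}_k^*]_{ii}\sqrt{\ln p/n_k}$ and replacing $n_k$ by $\tau n$ via $n_k\ge\tau n$ produces the stated $\gamma_n$, while the hypothesis $n\ge\frac{2}{\tau}\ln p$ keeps $t$ inside the range where the tail bound applies; the total failure probability is $2K/p$.

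For part (iii), the linear term of $\mathcal{F}_n$ cancels in the first-order Taylor remainder, so the left-hand side equals $\sum_k\frac{n_k}{n}$ times the Bregman divergence of $-\log\det$ at $\mathbf{L}_k^*+\mathbf{M}$ along $\mathbf{\Delta}_k$. Writing each divergence in integral form as $\int_0^1(1-t)\,\vecOp(\mathbf{\Delta}_k)^{\top}\big(\mathbf{B}_t^{-1}\otimes\mathbf{B}_t^{-1}\big)\vecOp(\mathbf{\Delta}_k)\,dt$ with $\mathbf{B}_t=\mathbf{L}_k^*+\mathbf{M}+t\mathbf{\Delta}_k$ (the Hessian of $-\log\det X$ being $X^{-1}\otimes X^{-1}$), I would lower-bound the quadratic form by $\|\mathbf{\Delta}_k\|_{\mathrm{F}}^2/\lambda_{\max}(\mathbf{B}_t)^2$. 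The key simplification is that $\mathbf{\Delta}_k\mathbf{1}=\mathbf{0}$ as a difference of Laplacians, so $\mathbf{B}_t$ is block-diagonal with respect to $\mathrm{span}(\mathbf{1})\oplus\mathrm{span}(\mathbf{1})^{\perp}$ and only its restriction $\mathbf{L}_k^*+t\mathbf{\Delta}_k$ to $\mathrm{span}(\mathbf{1})^{\perp}$ is seen by $\mathbf{\Delta}_k$; hence the relevant $\lambda_{\max}\le\|\mathbf{L}_k^*\|_2+\|\mathbf{\Delta}_k\|_2\le\lambda_{\mathbf{L}}+r$ for $t\le1$ and $\|\mathbf{\Delta}\|_{\mathrm{F}}\le r$, so the unit eigenvalue contributed by $\mathbf{M}$ never intrudes. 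Integrating gives the factor $\tfrac12$, and summing with $\frac{n_k}{n}\ge\tau$ together with $\|\mathbf{\Delta}\|_{\mathrm{F}}^2=\sum_k\|\mathbf{\Delta}_k\|_{\mathrm{F}}^2$ delivers the claimed bound $\frac{\tau}{2(\lambda_{\mathbf{L}}+r)^2}\|\mathbf{\Delta}\|_{\mathrm{F}}^2$.

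The main obstacle is part (ii): parts (i) and (iii) are essentially deterministic matrix-calculus arguments, but (ii) is where the statistical rate and the high-probability guarantee are produced, and it requires an explicit Gaussian concentration inequality for sample-covariance entries with the correct dependence on $\max_i[\mathbf{\Sigma}_k^*]_{ii}$, followed by a union bound whose validity range must be reconciled with the sample-size condition $n\ge\frac{2}{\tau}\ln p$ so as to recover the precise constant and the probability $1-2K/p$.
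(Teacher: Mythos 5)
Your plan follows essentially the same route as the paper's proof of Lemma~\ref{lemma:lemma2}: part (i) via $\log(|\mathbf{L}_k^*|_{+})=\log\det(\mathbf{L}_k^*+\mathbf{M})$ and the identity $(\mathbf{L}_k^*+\mathbf{M})^{-1}=(\mathbf{L}_k^*)^{\dagger}+\mathbf{M}$; part (ii) via Lemma~\ref{lemma:lemma1}(ii), the split into the deterministic $1/p$ term from $\mathbf{M}$ and the entrywise Gaussian sample-covariance tail bound (the paper invokes Lemma~1 of \cite{2011penalized}, with validity range $\delta \in (0,40\max_i[\mathbf{\Sigma}_k^*]_{ii})$ secured exactly as you say by $n\geq \frac{2}{\tau}\ln p$), followed by a union bound yielding probability $1-\frac{2K}{p}$; and part (iii) via the second-order Taylor remainder with Hessian blocks of the form $-\frac{n_k}{n}\,\mathbf{A}^{-1}\otimes\mathbf{A}^{-1}$ and the facts $\sigma_{\min}(\mathbf{A}^{-1}\otimes\mathbf{A}^{-1})=1/\|\mathbf{A}\|_2^2$ and $\|\mathbf{L}_k^*+\alpha\mathbf{\Delta}_k\|_2\leq \lambda_{\mathbf{L}}+r$. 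The only deviations are cosmetic or mildly in your favor: you use the integral (Bregman) form of the remainder where the paper uses the Lagrange form with a single $\alpha\in[0,1]$, and your explicit use of $\mathbf{\Delta}_k\mathbf{1}=\mathbf{0}$ to argue that the eigenvalue $1$ contributed by $\mathbf{M}$ is invisible to the quadratic form is more careful than the paper, which simply writes the Hessian as $(\mathbf{L}_k^*+\alpha\mathbf{\Delta}_k)^{-1}\otimes(\mathbf{L}_k^*+\alpha\mathbf{\Delta}_k)^{-1}$ with $\mathbf{M}$ silently dropped.
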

\begin{proof}
See Appendix \ref{sec:proofoflemm2} for details.
\end{proof}

\begin{lemma} \label{lemma: keylemmea}
Define a cone $\mathbb{C}=\{\mathbf{\Delta} \in \mathbb{R}^{pK \times pK }: \mathcal{P}(\mathbf{\Delta}_{\mathcal{S}^{\perp}}) \leq 3 \mathcal{P}(\mathbf{\Delta}_{\mathcal{S}}) \}$.
Suppose the tuning regularization parameter $\rho_n \geq 2 \gamma_n$. Let $0 <\epsilon \leq r$, if $f(\mathbf{\Delta}) >0$ for all elements $\mathbf{\Delta} \in \mathbb{C} \cap  \{\|\mathbf{\Delta}\|_{\mathrm{F}}=\epsilon\}$, then $\| \hat{\mathbf{\Delta}}_n\|_{\mathrm{F}}\leq \epsilon $.
\end{lemma}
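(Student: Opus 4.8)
The plan is to combine the convexity of $f$ with the decomposability of the penalty in the standard two-stage argument of Negahban et al., adapted to our block-diagonal parameter space. First I would record that $f$ is convex: the loss $-\mathcal{F}_n$ is convex since $\mathcal{F}_n$ is the concave log-determinant likelihood, the penalty $\mathcal{P}$ is convex by Lemma~\ref{lemma:lemma1}(i), and $f$ is their shifted sum normalized so that $f(\mathbf{0})=0$. By the optimality of $\hat{\mathbf{L}}_{\rho_n}$ we already have $f(\hat{\mathbf{\Delta}}_n)\le 0$, so the whole argument hinges on leveraging this single inequality.

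The first main step is to show that the error $\hat{\mathbf{\Delta}}_n$ lies in the cone $\mathbb{C}$. Starting from $f(\hat{\mathbf{\Delta}}_n)\le 0$, I would lower bound the loss difference by its first-order term using convexity of $-\mathcal{F}_n$, then control the linear term by the generalized H\"older inequality $|\langle \nabla\mathcal{F}_n(\mathbf{L}^*),\hat{\mathbf{\Delta}}_n\rangle|\le \mathcal{P}^*(\nabla\mathcal{F}_n(\mathbf{L}^*))\,\mathcal{P}(\hat{\mathbf{\Delta}}_n)\le \gamma_n \mathcal{P}(\hat{\mathbf{\Delta}}_n)$, invoking the gradient bound of Lemma~\ref{lemma:lemma2}(ii). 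For the penalty difference I would apply the decomposability inequality (\ref{eq:ieqpenalty}), namely $\mathcal{P}(\mathbf{L}^*+\hat{\mathbf{\Delta}}_n)-\mathcal{P}(\mathbf{L}^*)\ge \mathcal{P}(\hat{\mathbf{\Delta}}_{\mathcal{S}^\perp})-\mathcal{P}(\hat{\mathbf{\Delta}}_{\mathcal{S}})$, together with the seminorm triangle inequality $\mathcal{P}(\hat{\mathbf{\Delta}}_n)\le \mathcal{P}(\hat{\mathbf{\Delta}}_{\mathcal{S}})+\mathcal{P}(\hat{\mathbf{\Delta}}_{\mathcal{S}^\perp})$. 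Collecting terms yields $(\rho_n-\gamma_n)\,\mathcal{P}(\hat{\mathbf{\Delta}}_{\mathcal{S}^\perp})\le (\rho_n+\gamma_n)\,\mathcal{P}(\hat{\mathbf{\Delta}}_{\mathcal{S}})$, and substituting the threshold condition $\gamma_n\le \rho_n/2$ produces exactly $\mathcal{P}(\hat{\mathbf{\Delta}}_{\mathcal{S}^\perp})\le 3\,\mathcal{P}(\hat{\mathbf{\Delta}}_{\mathcal{S}})$, i.e. $\hat{\mathbf{\Delta}}_n\in\mathbb{C}$.

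The second step is a rescaling argument by contradiction. Suppose $\|\hat{\mathbf{\Delta}}_n\|_{\mathrm{F}}>\epsilon$, set $t=\epsilon/\|\hat{\mathbf{\Delta}}_n\|_{\mathrm{F}}\in(0,1)$ and $\tilde{\mathbf{\Delta}}=t\hat{\mathbf{\Delta}}_n$, so that $\|\tilde{\mathbf{\Delta}}\|_{\mathrm{F}}=\epsilon$. Since $\mathbb{C}$ is a cone—both $\mathcal{P}(\mathbf{\Delta}_{\mathcal{S}})$ and $\mathcal{P}(\mathbf{\Delta}_{\mathcal{S}^\perp})$ are positively homogeneous, so the defining inequality is invariant under positive scaling—we have $\tilde{\mathbf{\Delta}}\in\mathbb{C}$. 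By convexity of $f$ with $f(\mathbf{0})=0$, $f(\tilde{\mathbf{\Delta}})=f\bigl(t\hat{\mathbf{\Delta}}_n+(1-t)\mathbf{0}\bigr)\le t\,f(\hat{\mathbf{\Delta}}_n)\le 0$. But $\tilde{\mathbf{\Delta}}\in\mathbb{C}\cap\{\|\mathbf{\Delta}\|_{\mathrm{F}}=\epsilon\}$, so the hypothesis forces $f(\tilde{\mathbf{\Delta}})>0$, a contradiction. Hence $\|\hat{\mathbf{\Delta}}_n\|_{\mathrm{F}}\le\epsilon$.

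I expect the only delicate point to be the first step: correctly chaining the convexity lower bound on the loss, the dual-norm control of the gradient term via Lemma~\ref{lemma:lemma2}(ii), and the decomposability of $\mathcal{P}$ so that the condition $\rho_n\ge 2\gamma_n$ yields precisely the factor $3$ appearing in the cone. The rescaling argument itself is routine once the convexity of $f$ and the cone's scale-invariance are in hand.
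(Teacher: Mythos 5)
Your proposal is correct and follows essentially the same route as the paper's proof: both establish $\hat{\mathbf{\Delta}}_n \in \mathbb{C}$ by combining $f(\hat{\mathbf{\Delta}}_n)\le 0$ with the dual-norm bound of Lemma~\ref{lemma:lemma2}(ii), the decomposability inequality (\ref{eq:ieqpenalty}), and $\rho_n \ge 2\gamma_n$ to produce the factor~$3$, and then conclude via the convexity-based rescaling argument. The only cosmetic difference is that you write out the rescaling contradiction explicitly, whereas the paper delegates it to Lemma~4 of \cite{negahban2009unified}.
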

\begin{proof}
See Appendix \ref{sec:proofoflemm3} for details.
\end{proof}

Now, we apply Lemma 1-3 to derive the non-asymptotic estimation error bound. We first compute a lower bound for $f(\mathbf{\Delta})$.  For an arbitrary $\mathbf{\Delta} \in \mathbb{C} \cap  \{\|\mathbf{\Delta}\|_{\mathrm{F}}=\epsilon\}$, by (\ref{eq:ieqpenalty}) and (\ref{eq:RSCofLoss} ), we have 
\begin{equation}
\begin{aligned}
 f(\mathbf{\Delta}) 
 \geq & - \langle\nabla \mathcal{F}_n(\mathbf{L}^*),\mathbf{\Delta} \rangle+\frac{ \tau}{2(\lambda_{\mathbf{L}}+r)^2} \|\mathbf{\Delta}\|_{\mathrm{F}}^{2}\\
 &+\rho_n \big(\mathcal{P}\left(\mathbf{\Delta}_{\mathcal{S}^{\perp}}\right)-\mathcal{P}\left(\mathbf{\Delta}_{\mathcal{S}}\right)\big). 
\end{aligned}
\end{equation}

By the Cauchy-Schwarz inequality, we have $|\langle\nabla \mathcal{F}_n(\mathbf{L}^*),\mathbf{\Delta} \rangle| \leq \mathcal{P}^*\big(\nabla \mathcal{F}_n(\mathbf{L}^*)\big) \mathcal{P}(\mathbf{\Delta})$.  Assuming $\rho_n \geq 2\mathcal{P}^*(\nabla \mathcal{F}_n(\mathbf{L}^*))$, we conclude that $|\langle\nabla \mathcal{F}_n(\mathbf{L}^*),\mathbf{\Delta} \rangle| \leq \frac{\rho_n}{2}\big(\mathcal{P}\left(\mathbf{\Delta}_{\mathcal{S}^{\perp}}\right)+\mathcal{P}\left(\mathbf{\Delta}_{\mathcal{S}}\right)\big)$, and hence that 
\begin{equation} \label{eq:lowerboundoff}
\begin{aligned}
 f(\mathbf{\Delta}) 
 \geq &  \frac{\rho_n}{2}\big(\mathcal{P}\left(\mathbf{\Delta}_{\mathcal{S}^{\perp}}\right)-3\mathcal{P}\left(\mathbf{\Delta}_{\mathcal{S}}\right)\big)+\frac{ \tau}{2(\lambda_{\mathbf{L}}+r)^2} \|\mathbf{\Delta}\|_{\mathrm{F}}^{2}\\
\geq &- \frac{3\rho_n}{2}\mathcal{P}\left(\mathbf{\Delta}_{\mathcal{S}}\right)+\frac{ \tau}{2(\lambda_{\mathbf{L}}+r)^2} \|\mathbf{\Delta}\|_{\mathrm{F}}^{2}.
\end{aligned}
\end{equation}
By (\ref{eq:39}), we have that
\begin{equation} \label{eq:peanle}
\begin{aligned}
\mathcal{P}\left(\mathbf{\Delta}_{\mathcal{S}}\right)
\leq \sqrt{s}\Big(1+\rho \sqrt{\sigma _{\max }(\widetilde{\mathbf{J}})}\Big) \|\mathbf{\Delta}\|_{\mathrm{F}}.
\end{aligned}
\end{equation}
Substituting (\ref{eq:peanle}) into the lower bound (\ref{eq:lowerboundoff}), we obtain that 
\begin{equation} \label{eq:ineqefe}
f(\mathbf{\Delta}) 
 \geq - \frac{3\rho_n}{2}\sqrt{s}\Big(1+\rho \sqrt{\sigma _{\max }(\widetilde{\mathbf{J}})}\Big) \|\mathbf{\Delta}\|_{\mathrm{F}}+\frac{ \tau}{2(\lambda_{\mathbf{L}}+r)^2} \|\mathbf{\Delta}\|_{\mathrm{F}}^{2}.
\end{equation}
The right-hand side of the inequality (\ref{eq:ineqefe}) is a strictly positive definite quadratic form
in $ \|\mathbf{\Delta}\|_{\mathrm{F}}$, some
algebra shows that $f(\mathbf{\Delta}) >0$, as long as
\begin{equation}
\|\mathbf{\Delta}\|_{\mathrm{F}} \geq \frac{3\rho_n \sqrt{s}\Big(1+\rho \sqrt{\sigma _{\max }(\widetilde{\mathbf{J}})}\Big)(\lambda_{\mathbf{L}}+r)^2}{\tau} \equiv \epsilon_{r}.
\end{equation}

On the basis of Lemma \ref{lemma: keylemmea}, if we show that there exists a $r_0$ such that $\epsilon_{r_0} \leq r_0$, then we have  $\| \hat{\mathbf{\Delta}}_n\|_{\mathrm{F}}\leq \epsilon_{r_0} $.

Consider the inequality $(a+b)^2c \leq b, \forall a, b, c >0$, this inequality holds when $a=b$ and $bc\leq 1/4$. We apply the inequality above with $a=\lambda_{\mathbf{L}}$, $b=r_0$ and $c=\frac{3\rho_n \sqrt{s}\Big(1+\rho \sqrt{\sigma _{\max }(\widetilde{\mathbf{J}})}\Big)}{\tau}$. Combing $\rho_n =2\gamma_n$ with $bc\leq 1/4$ yields 
\begin{equation}
\begin{aligned}
n & \geq  \frac{2^{13} 15^2\lambda_{\mathbf{L}}^2\kappa_{\widetilde{\mathbf{J}}}^2 \nu^2}{\tau^3} s\ln p, \\
 \epsilon_{r_0}  & \leq 24 \kappa_{\widetilde{\mathbf{J}}}\lambda_{\mathbf{L}}^2\tau^{-3/2}\left(\frac{\sqrt{s}}{p}+40\sqrt{2} \nu \sqrt{\frac{s \ln p}{n}}\right),
\end{aligned}
\end{equation}
where $\kappa_{\widetilde{\mathbf{J}}}=\left(1+ \sigma_{\min}(\widetilde{\mathbf{J}})\sqrt{K}\right)\left(1+\rho \sqrt{\sigma _{\max }(\widetilde{\mathbf{J}})}\right)$ and $\nu=\max_{k,i}[(\mathbf{L}_k^*)^{\dagger}]_{ii}$. 

\section{ Proof of Lemmas} \label{app:lemmas}
\subsection{Proof of Lemma \ref{lemma:lemma1} }
\label{sec:proofoflemm1}

\begin{proof}

(i) The proposed penalty (\ref{eq:pengl}) does not penalize the diagonal elements, hence it is a seminorm. The convexity of $\mathcal{P}(\mathbf{L})$ comes from the convexity of $\ell_1$-norm and the weighted $\ell_2$-norm. The decomposability is obvious from the definition.  Let $\mathbf{L}^*+\mathbf{\Delta}=\mathbf{L}^*+\mathbf{\Delta}_{\mathcal{S}^{\perp}}+\mathbf{\Delta}_{\mathcal{S}}$, by the triangle inequality, we have $\mathcal{P}(\mathbf{L}^*+\mathbf{\Delta}) \geq \mathcal{P}(\mathbf{L}^*+\mathbf{\Delta}_{\mathcal{S}^{\perp}})-\mathcal{P}(\mathbf{\Delta}_{\mathcal{S}})$. Combining $\mathbf{L}^* \in \mathcal{S} $ with the decomposability of $\mathcal{P}(\cdot)$ yields
\begin{equation}
\begin{aligned}
\mathcal{P}(\mathbf{L}^*+\mathbf{\Delta}) -\mathcal{P}(\mathbf{L}^*) & \geq \mathcal{P}(\mathbf{L}^*+\mathbf{\Delta}_{\mathcal{S}^{\perp}})-\mathcal{P}(\mathbf{\Delta}_{\mathcal{S}})-\mathcal{P}(\mathbf{L}^*)\\
&=\mathcal{P}\left(\mathbf{\Delta}_{\mathcal{S}^{\perp}}\right)-\mathcal{P}\left(\mathbf{\Delta}_{\mathcal{S}}\right).
\end{aligned}
\end{equation}

(ii) Following the definition of the dual norm associated with decomposable penaltys in \cite{loh2020book}, we have 
\begin{equation}
\begin{aligned}
\mathcal{P}^{*}(\mathbf{L}):=\max \left( \max_{k} \|\mathbf{L}_k\|_{\max,\mathrm{off}}, \max_{i,j,i\neq j}\left(\mathbf{L}_{ij}^{\top}\widetilde{\mathbf{J}}^{-1}\mathbf{L}_{ij}\right)^{1 / 2}\right),
\end{aligned}
\end{equation}
Based on the equivalence of vector norms, we have
\begin{equation}
\begin{aligned}
& \max_{i,j,i\neq j}\left(\mathbf{L}_{ij}^{\top}\widetilde{\mathbf{J}}^{-1}\mathbf{L}_{ij}\right)^{1 / 2} \\
 \leq &
\sigma_{\min}(\widetilde{\mathbf{J}})  \max_{i,j,i\neq j}\left\|\mathbf{L}_{ij}\right\|_2 \\
 \leq & \sigma_{\min}(\widetilde{\mathbf{J}})\max_{i,j, i\neq j} \sqrt{K} \|(\mathrm{L}_{1,ij},\cdots,\mathrm{L}_{K,ij})^{\top}\|_{\infty}\\
 = & \sigma_{\min}(\widetilde{\mathbf{J}})  \sqrt{K} \max_{k \in [K]} \|\mathbf{L}_k\|_{\max,\mathrm{off}},
\end{aligned}
\end{equation}
where $\sigma _{\min }(\widetilde{\mathbf{J}})$ represents the smallest singular value of matrix $\widetilde{\mathbf{J}}$.
Therefore, the dual norm can be bounded by
\begin{equation}
\mathcal{P}^{*}(\mathbf{L})\leq\left(1+ \sigma_{\min}(\widetilde{\mathbf{J}})\sqrt{K}\right)\max_{k} \left(\|\mathbf{L}_k\|_{\max,\mathrm{off}}\right).
\end{equation} 

(iii)
By Cauthy Schwarz inequality and the concavity of the square root function, we have 
\begin{equation}
\begin{aligned}
\frac{\mathcal{P}(\mathbf{L})}{\|\mathbf{L}\|_{\mathrm{F}}}
&\leq  \sup_{\mathbf{L} \in \mathcal{S}  } \left( \frac{\mathcal{P}_1(\mathbf{L})}{\|\mathbf{L}\|_{\mathrm{F}}} + \frac{\mathcal{P}_2(\mathbf{L})}{\|\mathbf{L}\|_{\mathrm{F}}}  \right) \\
& \leq  \sup_{\mathbf{L} \in  \mathcal{S}  } \left(  \frac{\sum_{k=1}^K \|\mathbf{L}_k\|_{1,\mathrm{off}}}{\sqrt{\sum_{k=1}^K\|\mathbf{L}_k\|_{\mathrm{F},\mathrm{off}}^2}}+\frac{\rho \sum_{i \neq j} \sqrt{ \mathbf{L}^{\top}_{ij} \widetilde{\mathbf{J}} \mathbf{L}_{ij}}}{\sqrt{\sum_{k=1}^K\|\mathbf{L}_k\|_\mathrm{F,off}^2}} \right) \\
& \leq  \sqrt{s}+\rho\sup_{\mathbf{\mathbf{L}} \in  \mathcal{S} } \left(\frac{\sum_{i \neq j}\sqrt{\sigma _{\max }(\widetilde{\mathbf{J}})\|\mathbf{L}_{ij}\|_\mathrm{F}^2}}{\sqrt{\sum_{k=1}^K\|\mathbf{L}_k\|_\mathrm{F,off}^2}} \right) \\
& \leq \sqrt{s}\Big(1+\rho \sqrt{\sigma _{\max }(\widetilde{\mathbf{J}})}\Big).
\end{aligned} 
\end{equation}
\end{proof}
\subsection{Proof of Lemma \ref{lemma:lemma2} }
\label{sec:proofoflemm2}
\begin{proof}

(i) Rewrite empirical loss function 
\begin{equation}
\begin{aligned}
\mathcal{F}_n(\mathbf{L}^*) &=\frac{1}{n} \sum_{k=1}^{K} n_k \Big[ \log(|\mathbf{L}_k^*|_+)-\mathrm{tr} (\hat{\boldsymbol{\Sigma}}_k\mathbf{L}_k^*)\Big]\\
&=\frac{1}{n} \sum_{k=1}^{K} n_k \Big[ \log(\det(\mathbf{L}_k^*+\mathbf{M}))-\mathrm{tr} (\hat{\boldsymbol{\Sigma}}_k\mathbf{L}_k^*)\Big]
\end{aligned}
\end{equation}
with $\mathbf{M}=\frac{1}{p}\mathbf{1}\mathbf{1}^{\top}$. By taking derivatives blockwise, we have 
\begin{equation}
\begin{aligned}
[\nabla \mathcal{F}_n(\mathbf{L}^*)]_k& =\frac{n_k}{n}\left( (\mathbf{L}_k^*+\mathbf{M})^{-1}- \hat{\boldsymbol{\Sigma}}_k\right)\\
&=\frac{n_k}{n}\left((\mathbf{L}_k^*)^{\dagger}+\mathbf{M}-\hat{\boldsymbol{\Sigma}}_k\right).
\end{aligned}
\end{equation}

(ii) Based on the inequality (\ref{eq:penalitybound}) and $\tau \leq \frac{n_k}{n} \leq 1$, we have 
\begin{equation}
\begin{aligned}
&\mathcal{P}^{*}\left(\nabla \mathcal{F}_n(\mathbf{L}^*)\right) \\
\leq &  \left(1+ \sigma_{\min}(\widetilde{\mathbf{J}})\sqrt{K}\right)\max_{k} \|\mathbf{\Sigma}_k^*+\mathbf{M}- \hat{\boldsymbol{\Sigma}}_k \|_{\max,\mathrm{off}}\\
\leq &  \left(1+ \sigma_{\min}(\widetilde{\mathbf{J}})\sqrt{K}\right)\left(\frac{1}{p}+\max_{k} \|\mathbf{\Sigma}_k^*- \hat{\boldsymbol{\Sigma}}_k \|_{\infty}\right)
\end{aligned}
\end{equation}
Following the Lemma 1 in \cite{2011penalized}, we have 
\begin{equation}
\begin{aligned}
 \mathrm{Pr}\left(\|\mathbf{\Sigma}_k^*- \hat{\boldsymbol{\Sigma}}_k \|_{\infty} > \delta \right) 
 \leq  4 \exp \left(-\frac{n_k \delta^2}{2^55^2\max_{i}[\mathbf{\Sigma}_k^*]_{ii}^2}\right),
\end{aligned}
\end{equation}
for all $\delta \in (0, 40\max_{i}[\mathbf{\Sigma}_k^*]_{ii})$. By taking the union bound, for $n \geq \frac{2}{\tau} \ln p$, we have 
\begin{equation}
\max_k \|\mathbf{\Sigma}_k^*- \hat{\boldsymbol{\Sigma}}_k \|_{\infty} \leq 40 \sqrt{2} \max_{k,i}[\mathbf{\Sigma}_k^*]_{ii}\sqrt{\frac{\ln p}{n \tau}}, 
\end{equation}
with probability at least $1-\frac{2K}{p}$. Therefore, we obtain that 
\begin{equation}
\gamma_n= \left(1+ \sigma_{\min}(\widetilde{\mathbf{J}})\sqrt{K}\right)\left(\frac{1}{p}+ 40 \sqrt{2} \max_{k,i}[\mathbf{\Sigma}_k^*]_{ii}\sqrt{\frac{\ln p}{n \tau}}\right).
\end{equation}

(iii) Let $\alpha \in [0,1]$, for any $\|\mathbf{\Delta} \|_{\mathrm{F}} \leq r $, a Taylor-series expansion yields
\begin{equation}
\begin{aligned}
& -\mathcal{F}_{n}\left(\mathbf{L}^*+\mathbf{\Delta}\right)+\mathcal{F}_{n}\left(\mathbf{L}\right)+\langle\,\nabla \mathcal{F}_{n}\left(\mathbf{L}\right),\mathbf{\Delta}\rangle\\
=&\sum_k^K\frac{1}{2}\tr\left(\mathbf{\Delta}_k^{\top}  [\nabla^{2}\mathcal{F}_{n}\left(\mathbf{L}^{*} \right)]_k \mathbf{\Delta}_k\right).
\end{aligned}
\end{equation}
with $[\nabla^{2}\mathcal{F}_{n}\left(\mathbf{L}^{*} \right)]_k$ being $k$-th block of the Hessian matrix,  which is given by
\begin{equation} \label{eq:grd}
\begin{aligned}
[\nabla^{2}\mathcal{F}_{n}\left(\mathbf{L}^{*} \right)]_k 
 = -\frac{n_k}{ n} \Big[\left(\mathbf{L}_{k}^{*}+\alpha\mathbf{\Delta}_k\right)^{-1} \otimes\left(\mathbf{L}_{k}^{*}+\alpha\mathbf{\Delta}_k\right)^{-1}\Big].
\end{aligned}
\end{equation}
Thus, we have
\begin{equation}
\begin{aligned}
&  -\mathcal{F}_{n}\left(\mathbf{L}^*+\mathbf{\Delta}\right)+\mathcal{F}_{n}\left(\mathbf{L}\right)+\langle\,\nabla \mathcal{F}_{n}\left(\mathbf{L}\right),\mathbf{\Delta}\rangle\\
\geq &\sum_k^K \frac{1}{2} \sigma_{\min }\left(-[\nabla^{2}\mathcal{F}_{n}\left(\mathbf{L}^{*} \right)]_k\right)\|\text{vec}(\mathbf{\Delta}_k)\|_{2}^{2}\\
\geq &\sum_k^K \frac{\tau}{2} \frac{\|\mathbf{\Delta}_k\|_{\mathrm{F}}^{2}}{\left\|\mathbf{L}_{k}^{*}+\alpha\mathbf{\Delta}_k\right\|_{2}^{2}}\geq  \frac{ \tau}{2(\lambda_{\mathbf{L}}+r)^2} \|\mathbf{\Delta}\|_{\mathrm{F}}^{2},
\end{aligned}
\end{equation}
by using the fact that $\sigma_{\min}(\mathbf{A}^{-1} \otimes \mathbf{A}^{-1})=\frac{1}{\|\mathbf{A}\|_{2}^{2}}$ for any symmetric invertible matrix and the triangle inequality $\left\|\mathbf{L}_{k}^{*}+\alpha\mathbf{\Delta}_k\right\|_{2}^{2} \leq$ $\left(\left\|\mathbf{L}_{k}^{*}\right\|_{2}+r \right)^{2} \leq (\lambda_{\mathbf{L}} +r)^2.$ 
\end{proof}
\subsection{Proof of Lemma \ref{lemma: keylemmea} }
\label{sec:proofoflemm3}
\begin{proof}

By the inequality (\ref{eq:RSCofLoss}), we have 
\begin{equation} \label{eq:deln}
 -\mathcal{F}_{n}\left(\mathbf{L}^*+\hat{\mathbf{\Delta}}_n\right)+\mathcal{F}_{n}\left(\mathbf{L}^{*} \right)
\geq -|\langle \nabla \mathcal{F}_n(\mathbf{L}^*),\hat{\mathbf{\Delta}}_n \rangle |.
\end{equation}
Based on Lemma \ref{lemma:lemma2}(ii), the right hand side of the inequality (\ref{eq:deln})
is further bounded below by $-\frac{\rho_n}{2}\left(\mathcal{P}\left(\hat{\mathbf{\Delta}}_{n,\mathcal{S}^{\perp}}\right)+\mathcal{P}\left(\hat{\mathbf{\Delta}}_{n,\mathcal{S}}\right)\right)$.
Applying Lemma \ref{lemma:lemma1}(i) and the fact $f(\hat{\mathbf{\Delta}}_n) \leq 0$, we obtain
\begin{equation}
\frac{\rho_n}{2}\mathcal{P}\left(\hat{\mathbf{\Delta}}_{n,\mathcal{S}^{\perp}}\right)-\frac{3\rho_n}{2}\mathcal{P}\left(\hat{\mathbf{\Delta}}_{n,\mathcal{S}}\right) \leq 0,
\end{equation}
which implies $\hat{\mathbf{\Delta}}_{n} \in \mathbb{C}$.
The rest of the proof follows exactly as Lemma 4 in \cite{negahban2009unified}.
\end{proof}
\end{appendices}

\end{document}